\documentclass[11pt]{amsart}

\usepackage[margin=2.7cm]{geometry}
\parindent=0pt

\usepackage[parfill]{parskip}
\usepackage{graphicx}
\usepackage{float}
\usepackage{amsmath}
\usepackage{amssymb}
\usepackage{amsthm}
\usepackage{array}
\usepackage{mathrsfs}
\usepackage[all]{xy}
\usepackage{fancyhdr}
\usepackage{enumitem}
\usepackage{subfig}
\usepackage{tabularx}

\theoremstyle{plain}
\newtheorem{theorem}{Theorem}[section]

\newtheorem{lemma}[theorem]{Lemma}
\newtheorem{prop}[theorem]{Proposition}

\theoremstyle{definition}

\theoremstyle{remark}
\newtheorem{remark}[theorem]{Remark}

\DeclareMathOperator{\CP2}{\mathbb{C}P^2}
\DeclareMathOperator{\CPo}{\mathbb{C}P^1}

\DeclareMathOperator{\barCP2}{\overline{\mathbb{C}P^2}}

\DeclareMathOperator{\Z}{\mathbb{Z}}

\title{Comparing star surgery to rational blow-down}
\author{Laura Starkston}

\begin{document}
\maketitle

\begin{abstract}
	We compare the star surgery operations introduced in \cite{KS} to the generalized rational blow-down. We show that star surgery shares the properties that make rational blow-down useful for constructions of small exotic symplectic 4-manifolds. Then we show that star surgery operations provide a strictly more general class of operations by proving that there is an infinite family of star surgeries which are inequivalent to any sequence of generalized symplectic rational blow-downs. This answers a question posed to the author by Ozbagci. It also demonstrates that the monodromy substitutions coming from star surgery operations yield relations in planar mapping class monoids which cannot be positively generated by the relations determined in \cite{EMV} which come from the generalized rational blow-downs.
\end{abstract}

\section{Introduction}

	The rational blow-down operation on 4-manifolds was first defined by Fintushel and Stern \cite{FS}, generalized by Park \cite{P} and Stipsicz, Szab\'{o}, and Wahl \cite{SSW}, and shown to be symplectic by Symington \cite{Sy1, Sy2}. A rational blow-down cuts out a neighborhood of spheres intersecting in a particular way and glues in a rational homology ball along the common boundary. More general operations, called star surgery, were introduced by Karakurt and the author in \cite{KS}. These operations similarly cut out a convex neighborhood of spheres intersecting according to a star shaped graph, but the piece which is glued in can be any convex symplectic filling of the corresponding contact manifold, and need not be a rational homology ball. This allows for a much larger range of operations. The effect on the Seiberg-Witten invariants is understood and similar for the rational blow-down and the more general star surgery operations, and both can be used to construct symplectic manifolds with exotic smooth structures (see for example \cite{P, SS, M, KS}). The rational blow-down was particularly useful in constructing examples with small Euler characteristic because it kills off many generators of second homology. We prove here that this is generally true of all star surgery operations.
	
	\begin{theorem}\label{thm:smaller}
		Every nontrivial star surgery reduces the Euler characteristic and second Betti number of the manifold it is applied to.
	\end{theorem}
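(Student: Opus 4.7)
The plan is to reduce the global claim to a local comparison between the convex filling $N$ (the star-shaped plumbing neighborhood removed from $X$) and the convex filling $B$ (the piece glued back in), sharing a common contact boundary $Y = \bdry N = \bdry B$. First, since $Y$ is a closed $3$-manifold, $\chi(Y)=0$, so additivity of Euler characteristic gives $\chi(X')-\chi(X) = \chi(B)-\chi(N)$. The star-shaped plumbings arising in star surgeries (as described in \cite{KS}) have $Y$ a rational homology sphere, so $H_1(Y;\Q)=H_2(Y;\Q)=0$, and Mayer--Vietoris for the decompositions $X=(X\setminus\mathrm{int}\,N)\cup_Y N$ and $X'=(X\setminus\mathrm{int}\,N)\cup_Y B$ decouples to give $b_2(X')-b_2(X) = b_2(B)-b_2(N)$. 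It therefore suffices to prove the local inequalities $\chi(B)<\chi(N)$ and $b_2(B)<b_2(N)$.

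Next, I would collapse these two inequalities into one. The neighborhood $N$ deformation retracts onto the star-shaped union of $n$ transversely intersecting $2$-spheres, so $\chi(N)=n+1$ and $b_2(N)=n$, where $n$ is the number of vertices of the star graph. The convex filling $B$ has a Stein handle decomposition with handles of index at most two, so $\chi(B)=1-b_1(B)+b_2(B)$; since $Y$ is a rational homology sphere, the long exact sequence of the pair $(B,Y)$ together with Poincar\'e--Lefschetz duality forces $b_1(B)=0$. Hence $\chi(B)=1+b_2(B)$, and both target inequalities reduce to the single statement $b_2(B)<n$.

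The remaining step, which I expect to be the main obstacle, is to establish $b_2(B)<n$ for any nontrivial convex filling $B$ of $Y$. Here I would use the planar Lefschetz fibration description of star surgery from \cite{KS}: both $N$ and $B$ arise as total spaces of positive allowable Lefschetz fibrations over $D^2$ with a common planar fiber $F$, and for any such fibration $\chi = \chi(F)+k$, where $k$ is the number of Lefschetz critical points (equivalently, the length of the positive monodromy factorization of the boundary). What must be proved is that the canonical factorization of the boundary monodromy corresponding to the star plumbing $N$ is strictly longer than any competing positive factorization coming from a different convex filling of $Y$. This length-maximality should follow from the fact that each vertex of the star graph contributes an irreducible positive Dehn twist which cannot be consolidated into fewer positive factors, together with an a priori bound on $b_2$ of symplectic fillings of $Y$ (for instance via the signature/$c_1^2$ invariance of the boundary contact invariant, or via the $d$-invariant inequalities of Oszv\'ath--Szab\'o applied to negative-definite fillings of the rational homology sphere $Y$). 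The nontriviality hypothesis rules out the degenerate case $B\cong N$ and thereby yields the strict inequality $b_2(B)<n$, completing the proof.
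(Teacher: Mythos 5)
Your reduction steps are sound and consistent with the paper's own framework: the surgery replaces the convex plumbing $N$ by another convex filling $B$ of the same contact boundary, so $\chi$ and $b_2$ change by $\chi(B)-\chi(N)$ and $b_2(B)-b_2(N)$ (the boundary is a rational homology sphere), and since each filling is homotopy equivalent to a $2$-complex with $b_1=0$ (this is Proposition~\ref{prop:b1} together with the Stein/planar open book observation), both target inequalities collapse to the single statement $b_2(B)<b_2(N)=n$. The gap is in the final step, which you correctly flag as the main obstacle but do not close. Asserting that the plumbing's positive factorization of the planar monodromy is strictly longer than any competing positive factorization is a restatement of $b_2(B)<n$, not a proof of it: the lantern and daisy relations show that positive factorizations of a fixed planar mapping class can have different lengths, so there is no a priori ``irreducibility'' of the individual Dehn twists preventing consolidation --- indeed the whole point of star surgery is that such consolidation happens. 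The suggested fallbacks ($d$-invariant or $c_1^2$ bounds for negative-definite fillings) are not developed and would at best yield some bound on $b_2(B)$, not the sharp strict inequality against $n$ for every nontrivial filling.

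The paper's route through this step is entirely different and constitutes the real content of Theorem~\ref{thm:maxE}. One caps off $(Y,\xi)$ by the concave dual plumbing, which contains a $+1$ symplectic sphere, so by McDuff's theorem every minimal convex filling is the complement of an embedding of the cap $P'$ into some $\CP2\#N\barCP2$, with $\chi=2+N-|P'|$ where $N$ is the number of distinct exceptional classes appearing in the homological embedding. The adjunction formula and the intersection pattern of the cap (Lemmas~\ref{l:homform}--\ref{l:share2}) then drive a combinatorial argument showing that the canonical homological embedding, whose complement is the plumbing itself, maximizes the number of distinct exceptional classes; any other homological embedding shares exceptional classes among more cap spheres and hence uses strictly fewer of them, giving strictly smaller Euler characteristic for the complement. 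To repair your proof you would need to supply an argument of comparable strength at this point; the Lefschetz fibration bookkeeping alone does not provide one.
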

	
	While the star surgery operations allow for surgeries on much more general configurations of surfaces, it is not obvious whether such operations are equivalent to sequences of previously understood rational blow-downs (the configurations to be rationally blown-down may not be visible from the initial plumbing). In fact, for linear plumbings, whose boundaries are lens spaces, Bhupal and Ozbagci showed that all of the minimal convex fillings of these canonical contact lens spaces can be obtained from the plumbing via a sequence of the symplectic rational blow-downs of Fintushel-Stern and Park. Thus any symplectic surgery operation defined using linear plumbings is equivalent to a sequence of rational blow-downs. In light of these results, Ozbagci asked the author whether the same was true for star surgeries. While some of the star surgery operations can be decomposed into sequences of rational blow-downs (see \cite{KS} for examples), we show here that this is not the case in general.
	
	\begin{theorem}\label{thm:infinite}
	There are infinitely many star surgeries which are not equivalent to any sequences of generalized symplectic rational blow-downs.
	\end{theorem}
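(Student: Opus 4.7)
The strategy is to exhibit an explicit infinite family of star surgery operations $\{S_n\}_{n \ge 1}$ and then obstruct the equivalence of each $S_n$ with any sequence of generalized symplectic rational blow-downs by an invariant argument. I would select a family for which each $S_n$ cuts out a neighborhood of a star-shaped plumbing $\Gamma_n$ and glues in a specific convex symplectic filling $P_n$ of the contact Seifert fibered boundary. A natural candidate comes from the Lefschetz fibration examples in \cite{KS}, since for these both the star-side and filling-side monodromy factorizations in the planar mapping class monoid are explicit and depend on a single integer parameter $n$.

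The key reduction uses the correspondence between symplectic surgery operations and positive monodromy substitutions. Each generalized symplectic rational blow-down corresponds to one of the positive substitutions identified by Endo-Mark-Van Horn-Morris in \cite{EMV}, so any equivalence of $S_n$ with a sequence of rational blow-downs translates into a chain of EMV substitutions carrying the star-side positive factorization to the filling-side factorization. Hence Theorem~\ref{thm:infinite} reduces to producing infinitely many $n$ for which no such chain of EMV substitutions exists.

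To obstruct the existence of such a chain, I would construct an invariant of positive factorizations that is preserved by every EMV substitution but takes different values on the $\Gamma_n$- and $P_n$-side factorizations. Coarse topological invariants like $b_2$, Euler characteristic, or signature cannot serve this role, since they depend only on the resulting symplectic 4-manifold and hence automatically match. A refined invariant must encode finer combinatorial data, such as the multiplicities of boundary-parallel Dehn twists around each puncture of the planar surface, or a number-theoretic quantity extracted from the continued fraction data of the linear sub-configurations on which EMV substitutions are permitted to act. The main obstacle lies precisely in this step: the invariant must remain stable under rational blow-downs performed on linear configurations which live in the ambient 4-manifold but are invisible from $N(\Gamma_n)$ itself, and it must separate the star and filling sides for infinitely many $n$. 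Once a suitable invariant is in hand, the infinite-family conclusion should follow by direct computation on the explicit factorizations associated to $\Gamma_n$ and $P_n$, possibly combined with the $b_2$-reduction constraint of Theorem~\ref{thm:smaller} to bound the length of any hypothetical substitution sequence.
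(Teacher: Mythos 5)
Your proposal correctly identifies the overall shape of the argument (exhibit an explicit infinite family and obstruct each member), but it has a genuine gap at exactly the point you flag as ``the main obstacle'': you never construct the invariant of positive factorizations that is preserved by every EMV substitution yet separates the two sides. Without that invariant the proof does not exist, and finding one is not a routine matter --- it would amount to a structural result about the planar mapping class monoid that is at least as hard as the theorem itself. There is also a soft spot in your reduction step: a symplectic rational blow-down in a sequence acts on a configuration inside the intermediate $4$-manifold that need not be visible in, or compatible with, any given planar Lefschetz fibration, so the claim that every such sequence literally translates into a chain of EMV substitutions on the fixed factorization requires justification (the paper explicitly warns that the configurations to be blown down ``may not be visible from the initial plumbing'').

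The paper sidesteps the monoid-invariant problem entirely by working with symplectic fillings rather than factorizations. For the family $P_{a,b}$ it first \emph{classifies} all minimal symplectic fillings of the contact boundary: there are exactly two, the plumbing and one other filling $L_{a,b}$. Since every intermediate stage of a rational blow-down sequence is again a minimal filling of the same contact boundary, this classification immediately collapses any hypothetical sequence to a \emph{single} rational blow-down. The Euler characteristic gap (which you dismiss as too coarse, but which here serves as a constraint on the operation rather than as a separating invariant) then forces the blown-down configuration to have exactly $ab$ spheres. Finally, the adjunction formula shows that any symplectic sphere $S$ embedded in $P_{a,b}$ satisfies $(a+b)\mid [S]^2+2$, which rules out every configuration on the Bhupal--Stipsicz list of the right size whenever $a+b$ does not divide $ab+1$. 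If you want to salvage your approach, the lesson is that the obstruction should live on the $4$-manifold side (homology classes of symplectic spheres, adjunction, divisibility) rather than in the mapping class monoid, with the filling classification supplying the bound on the length of the sequence that you hoped to extract from Theorem~\ref{thm:smaller}.
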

	
	This result implies that star surgery could potentially be used to construct manifolds inaccessible to rational blow-down techniques. However, it is unknown whether or not the closed manifolds obtained from star surgery in \cite{KS} are diffeomorphic to previous exotic examples constructed through rational blowdown because the Seiberg-Witten invariants agree in such examples. The relationship between exotic manifolds which have the same known invariants but are obtained from distinct constructions, is quite interesting but very mysterious. The star surgery adds to the set of examples on which this relationship can be studied.
	
	As with the rational blow-down and its generalizations, each star surgery operation can be interpreted as a \emph{monodromy substitution}: a pair of factorizations of a certain diffeomorphism of a planar surface into products of positive Dehn twists. Each such monodromy substitution is a relation in the planar mapping class monoid. While a complete presentation for the planar mapping class \emph{groups} is well understood, we are much further from understanding a complete set of relations for the planar mapping class \emph{monoids}. The monodromy substitutions corresponding to the generalized rational blow-down operations were found in \cite{EMV}.  The particular monodromy substitutions for the star surgery operations in theorem \ref{thm:infinite} are related to certain line arrangements as is seen in section \ref{s:notQbd}. These mapping class monoid relations agree with the corresponding relations from \cite{H} though the interpretation here is in terms of symplectic fillings while in \cite{H} the interpretation is in terms of braid monodromy. The two perspectives are closely related, but the advantage of the symplectic filling interpretation is that we are able to not only \emph{discover} such relations but also, to some extent, \emph{classify} these relations and their dependencies on each other. Theorem \ref{thm:infinite} can be interpreted as saying that this infinite family of relations in the planar mapping class monoid are not generated by the relations coming from the generalized rational blow-down monodromies.

\section{Background: classifying symplectic fillings}

	We briefly describe the strategy used in \cite{St} to classify symplectic fillings of Seifert fibered spaces with a canonical contact structure. We restrict to the Seifert fibered spaces which arise as the boundary of a star-shaped plumbing of spheres with $k$ arms, such that the Euler number on the central sphere satisfied the inequality $e_0\leq -k-1$ and all other Euler numbers are $\leq -2$. We call such a star-shaped plumbing \emph{dually positive}. These plumbings have convex symplectic structures by a result of Gay and Stipsicz \cite{GS2}.	Denote the induced contact boundary by $(Y,\xi)$. Then $(Y,\xi)$ has a concave cap given by another star-shaped plumbing of spheres, where here the central sphere has Euler number $+1$ as in figure \ref{fig:dualgraph}. This cap is the plumbing specified by the ``dual graph'' described by Stipsicz, Szab\'{o}, and Wahl in \cite{SSW} (the construction is described in the proof of theorem \ref{thm:maxE} and further details can also be found in section 2.1 of \cite{St}).
	
	We will refer to the star-shaped plumbing with convex boundary (whose central sphere has square $e_0\leq -k-1$) as the \emph{filling plumbing} or \emph{convex plumbing} and the star-shaped plumbing with concave boundary (whose central sphere has square $+1$) as the \emph{cap plumbing} or \emph{concave plumbing}. Note that the filling plumbing is the piece which is cut out during the star surgery. However, the cap plumbing is actually the main object which is studied in the classification proofs.
	
	We can glue any alternate convex filling to this concave cap plumbing to give a closed symplectic manifold, which by a classification theorem of McDuff \cite{Mc}, is necessarily a blow-up of $\CP2$ (because it contains a symplectic sphere of self-intersection $1$). Therefore all convex fillings of $(Y,\xi)$ embed into $\CP2\# N\barCP2$ (for some $N$) as the complement of the cap. The strategy is to classify all symplectic embeddings of the cap plumbing into $\CP2 \# N\barCP2$ up to isotopy.

	\begin{figure}
		\centering
		\includegraphics[scale=.6]{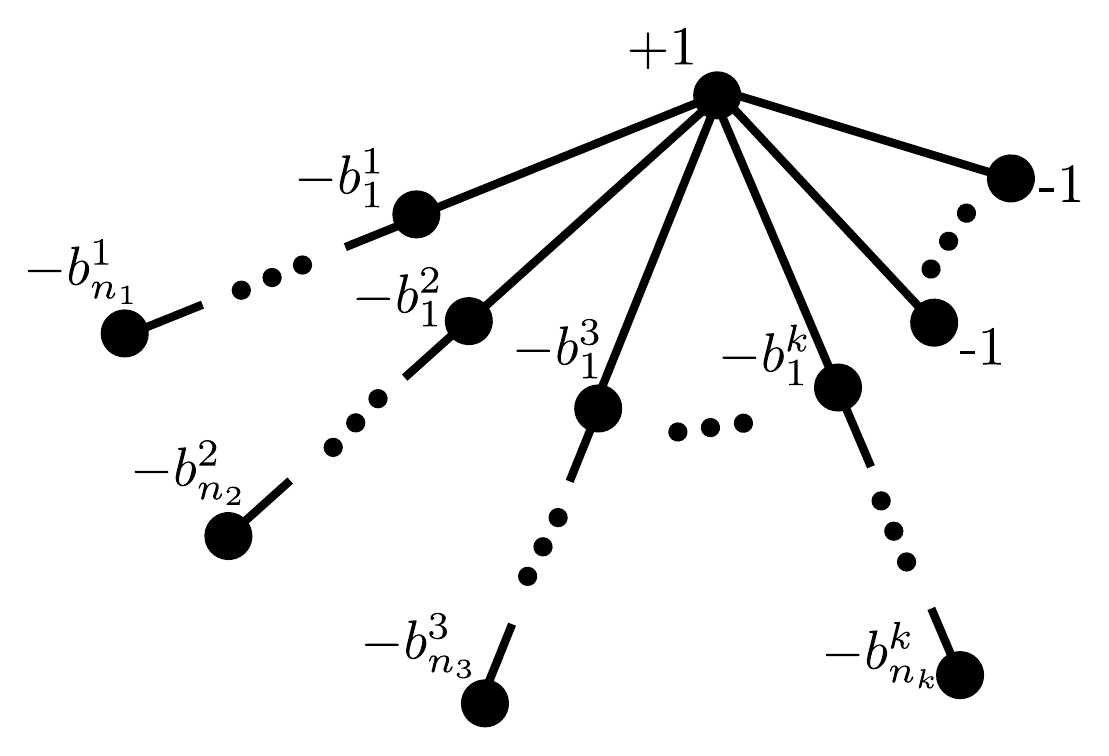}
		\caption{Dual graph plumbing corresponding to the concave cap. $b_i^j\geq 2$.}
		\label{fig:dualgraph}
	\end{figure}

	Classifying symplectic embeddings of the cap into $\CP2\# N\barCP2$ can be split into two steps. First classify \emph{homological embeddings}: the induced maps on second homology. Because the core spheres of this plumbing are symplectic, the adjuction formula and intersection relations can be used to significantly constrain the possible homological embeddings of a given plumbing. The second step is to determine the isotopy classes of embeddings inducing a given homological embedding. By carefully blowing down $J$-holomorphic exceptional spheres, this reduces to classifying \emph{symplectic line arrangements} where the combinatorics of the line arrangement is determined by the homological embedding. In many cases, one can prove there is a unique isotopy class for each homological embedding. After classifying the embeddings one can describe the complementary convex fillings as Lefschetz fibrations using Kirby calculus.

\section{Euler characteristic and star surgery}

	A star surgery operation replaces the symplectic filling given by a star-shaped plumbing of spheres, by an alternate symplectic filling of the same contact boundary. Therefore a star surgery will decrease the Euler characteristic of a manifold precisely when the Euler characteristic of the alternate symplectic filling is less than that of the plumbing.  Constructions of many symplectic 4-manifolds with small Euler characteristic have historically proven to be more difficult to find than those with large Euler characteristic. Therefore operations which reduce Euler characteristic and can be used to produce exotic 4-manifolds are of particular interest to 4-manifold topologists. The main goal of this section is to prove theorem \ref{thm:smaller} which is an immediate consequence of the following more precise results.
	
	\begin{theorem}
		Let $P$ be a dually-positive star-shaped convex plumbing of spheres inducing $(Y,\xi)$ on its boundary. Then the $P$ has larger Euler characteristic than any other minimal convex symplectic filling of $(Y,\xi)$. \label{thm:maxE} 
	\end{theorem}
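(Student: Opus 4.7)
The plan is to use the strategy outlined in Section 2 to convert the theorem into a comparison of Betti numbers of two rational surfaces. Let $C$ be the SSW concave cap from Figure \ref{fig:dualgraph}, so that $P \cup_Y C \cong \CP2\#N\barCP2$ for the integer $N$ determined by the data of $P$. For any minimal convex symplectic filling $W$ of $(Y,\xi)$, the closed symplectic manifold $W \cup_Y C$ contains the symplectic $+1$-sphere $C_0$, so McDuff's theorem identifies it symplectomorphically with $\CP2\#N'\barCP2$ for some $N' \geq 0$. Because $Y$ is a rational homology sphere, rational Mayer--Vietoris gives $b_2(P) + b_2(C) = N+1$ and $b_2(W) + b_2(C) = N'+1$, so $b_2(P) - b_2(W) = N - N'$. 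Any convex symplectic filling in dimension four has the homotopy type of a $2$-complex (by Eliashberg), so $b_3(W) = b_4(W) = 0$, and $b_1(P) = 0$ by simple-connectedness of $P$; a short computation then yields
\begin{equation*}
\chi(P) - \chi(W) = (N - N') + b_1(W).
\end{equation*}
The theorem thus reduces to showing $N \geq N'$, with equality forcing $W \cong P$ (and hence $b_1(W) = 0$).

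The main step will be to bound $N'$ from above using the cap combinatorics by analyzing all symplectic embeddings of the fixed plumbing $C$ into $\CP2\#N'\barCP2$. Applying McDuff's theorem to $C_0$, one can choose a basis $(H, E_1, \ldots, E_{N'})$ of $H_2(\CP2\#N'\barCP2)$ making the intersection form standard, such that $[C_0] = H$ and each $E_i$ is represented by a $J$-holomorphic exceptional sphere disjoint from $C_0$. The plumbing adjacency $[C_0] \cdot [C_1^j] = 1$ forces the $H$-coefficient of each first arm sphere $C_1^j$ to equal $1$, and then $(C_1^j)^2 = -b_1^j \leq -2$ combined with adjunction forces the $E_i$-coefficients to satisfy $\sum_i c_i^2 = b_1^j + 1 \geq 3$. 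Propagating this analysis recursively along each arm, using transverse intersection with the previous arm sphere and disjointness from other arms, one obtains an explicit count of exceptional classes consumed by the cap; comparison with the canonical embedding $P \cup C$ will deliver the upper bound $N' \leq s_P + s_C - 1 = N$, where $s_P$, $s_C$ denote the numbers of spheres in the respective plumbings.

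Finally, in the equality case $N = N'$ with $b_1(W) = 0$, the tight homological count identifies the classes of all arm spheres of $C$ with those in the canonical embedding, and a $J$-holomorphic isotopy classification of symplectic line arrangements, as outlined in Section 2 and developed in \cite{St}, pins $W$ down to be symplectomorphic to $P$. The main obstacle is precisely this combinatorial/classification step: constraining the classes of arm spheres deep in the tree requires a careful recursive argument using adjunction, positivity of $J$-holomorphic intersections, and the rigidity of the cap combinatorics, while simultaneously ruling out embeddings in which extra exceptional classes are hidden entirely in the complement $W$.
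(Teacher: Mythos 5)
Your setup coincides with the paper's: glue the SSW concave cap $C$ to an arbitrary minimal filling $W$, invoke McDuff to identify $W\cup_Y C$ with $\CP2\#N'\barCP2$, and reduce the theorem to the inequality $N\geq N'$, i.e.\ to showing that the canonical homological embedding of the cap (the one complementary to $P$) uses the largest possible number of distinct exceptional classes. The Mayer--Vietoris bookkeeping, the identity $N=s_P+s_C-1$, and the adjunction computation forcing $[C_1^j]=h-e_{i_1}-\cdots-e_{i_{b_1^j+1}}$ with all exceptional coefficients in $\{0,-1\}$ are all correct and match Lemma \ref{l:homform}.

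The gap is exactly the step you label ``the main obstacle.'' You assert that propagating adjunction recursively along the arms yields ``an explicit count of exceptional classes consumed by the cap,'' but no such count exists: different homological embeddings of the same cap genuinely consume different numbers of exceptional classes (that is the whole content of the theorem), so what is needed is an inequality with the canonical embedding as the extremal case, not a computation. The mechanism that makes this work is the observation that, since every $[C_i^j]$ has exceptional coefficients in $\{0,\pm1\}$ and fixed square, the number of exceptional classes counted \emph{with multiplicity} is the same for every embedding; hence maximizing the number of \emph{distinct} classes is equivalent to minimizing coincidences, i.e.\ classes appearing with nonzero coefficient in more than one cap sphere. One then needs the intersection-pairing constraints (Lemmas \ref{l:mixing}--\ref{l:share2}) together with a case analysis showing that any deviation from the canonical sharing pattern --- the $C_1^j$ not all sharing a single $e_1$, or the $+1$-class of $C_{i+1}^j$ failing to appear with coefficient $-1$ in $C_i^j$ --- forces strictly more coincidences than it eliminates. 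None of this appears in your outline. Moreover, ``disjointness from other arms'' is not a usable input: spheres in distinct arms have algebraic intersection zero, but in non-canonical embeddings they do share exceptional classes (indeed Lemma \ref{l:mixing} forces every pair of first-arm spheres to share one, and the fillings $L_{a,b}$ of Section 5 share many more), so class-disjointness across arms is a conclusion special to the canonical embedding, not a hypothesis. Your treatment of the equality case via the isotopy classification of symplectic line arrangements correctly defers to the results developed in Section 4 and in \cite{St}, which is also what the paper does.
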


	We know that all of the symplectic fillings we are considering will embed in a blow-up of $\CP2$ as the complement of the cap plumbing. Both $\CP2\#N\barCP2$ and the cap plumbing are simply connected. Moreover, the plumbings we consider here have Seifert-fibered boundaries which are rational homology spheres (the dually positive condition $e_0\leq -k-1$ ensures this). Because the cap plumbing $P'$ contains a sphere of positive square, $b_2^+(P')=b_2^+(\CP2\#N\barCP2)=1$. Therefore by the Mayer-Vietoris sequence we conclude the following results.
	
	\begin{prop}\label{prop:b1}
		Every convex symplectic filling $W$ of $(Y,\xi)$ as above has $b_1(W)=0$ and $b_2^+(W)=0$.
	\end{prop}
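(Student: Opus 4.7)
The plan is to apply the Mayer--Vietoris sequence (with $\Q$ or $\R$ coefficients) to the decomposition $X := \CP2 \# N\barCP2 = W \cup_Y P'$, which is available because by the discussion preceding the statement, $W$ embeds in some blow-up of $\CP2$ as the complement of the cap plumbing $P'$. Every input we need is already assembled in the excerpt: $X$ and $P'$ are simply connected, $Y$ is a rational homology sphere (the dually-positive hypothesis $e_0 \leq -k-1$), and $b_2^+(P') = b_2^+(X) = 1$.

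For $b_1(W) = 0$, I would run the degree-$1$ fragment of Mayer--Vietoris over $\Q$:
$$H_1(Y;\Q) \to H_1(W;\Q) \oplus H_1(P';\Q) \to H_1(X;\Q).$$
The left term vanishes because $Y$ is a $\Q$-homology sphere, and the right term together with the $P'$ summand vanishes by simple connectivity of $X$ and $P'$. The middle map is thus both injective (from the left-vanishing) and lands in $0$, forcing $H_1(W;\Q) = 0$.

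For $b_2^+(W) = 0$, I would first read off the isomorphism $H_2(W;\R) \oplus H_2(P';\R) \cong H_2(X;\R)$ from the degree-$2$ fragment, which collapses to a short exact sequence since $H_2(Y;\R) = H_1(Y;\R) = 0$. The key additional point is that this splitting is \emph{orthogonal} with respect to the intersection form on $X$: classes from the two summands are represented by cycles supported on opposite sides of $Y$, so their geometric intersection vanishes. Taking the positive part of the signature then yields $1 = b_2^+(X) = b_2^+(W) + b_2^+(P') = b_2^+(W) + 1$, whence $b_2^+(W) = 0$.

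The only subtlety -- and it is really not much of an obstacle -- is the orthogonal splitting of the intersection form under Mayer--Vietoris. This is a standard consequence of Poincar\'e--Lefschetz duality together with the rational homology sphere hypothesis on $Y$, which is precisely what guarantees that every class in $H_2(X;\R)$ can be represented by a cycle supported entirely on one side of $Y$, so no cross-pairings arise.
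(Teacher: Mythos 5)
Your argument is exactly the one the paper intends: the paper's ``proof'' is the paragraph preceding the proposition, which assembles the same inputs (simple connectivity of $\CP2\#N\barCP2$ and of the cap $P'$, the rational homology sphere condition on $Y$, and $b_2^+(P')=b_2^+(\CP2\#N\barCP2)=1$) and then invokes Mayer--Vietoris. You have simply written out the degree-$1$ and degree-$2$ fragments and the orthogonal splitting of the intersection form that the paper leaves implicit, and all of these steps are correct.
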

	
	In fact, the contact manifolds we consider have planar open books \cite{GM}  so every convex symplectic filling is deformation equivalent to a Stein domain \cite{W} and thus have $b_3=0$. It follows that the Euler characteristic of the filling determines the rank of its second homology. Examples of such fillings have varying fundamental groups. In examples, we have observed trivial and finite cyclic fundamental groups for fillings which are distinct from the convex plumbing.
	
	The Euler characteristic of any symplectic filling which arises as the complement of an embedding of the cap plumbing into $\CP2 \# N\barCP2$ is $2+N-|P'|$ where $|P'|$ is the number of spheres plumbed together in the cap. To find fillings without exceptional spheres, $N$ must be the minimal number of distinct exceptional classes appearing with non-zero coefficient in the homological embedding. This follows from Lemma 4.5 of \cite{L}, which proves that there is a $J$-holomorphic exceptional sphere disjoint from the cap plumbing spheres whenever there is an exceptional sphere with algebraic intersection $0$ with each sphere in the cap plumbing. Therefore to search for minimal symplectic fillings with small Euler characteristic, we want to look for homological embeddings of the cap which use a small number of exceptional classes relative to the size of the cap plumbing.
	
	There are significant restrictions on the homology classes represented by the images of the cap plumbing spheres under a symplectic embedding. McDuff's theorem identifies the $+1$ sphere in the cap with the complex projective line, so the image of its homology class is necessarily $h$. The adjunction formula and the intersection form on the cap plumbing imply the following lemma which is proven in \cite{St} section 2.4.

	Let $C_i^j$ be the image in $\CP2\#N\barCP2$ of the core plumbing sphere in the cap corresponding to a vertex in the plumbing graph which lies in the $j^{th}$ arm and is separated from the central $+1$ vertex by $i$ edges.
	
	\begin{lemma} \label{l:homform} The homology class of $C_1^j$ (intersecting the central sphere) has the form
		$$\left[C_1^j\right]=h-e_{m_1}^{1,j}-\cdots -e_{m_{n_{1,j}+1}}^{1,j}$$
	
	The homology class of $C_i^j$ for $i>1$ (not intersecting the central sphere) has the form
	$$\left[C_i^j \right]=e_{m_0}^{i,j}-e_{m_1}^{i,j}-\cdots - e_{m_{n_{i,j}-1}}^{i,j}$$
	\end{lemma}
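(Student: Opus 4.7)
The plan is to combine three standard ingredients: the adjunction formula for symplectic spheres in $\CP2 \# N \barCP2$, McDuff's identification of the central $+1$ sphere with the class $h$, and positivity of symplectic area.

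Write $[C_i^j] = a\, h + \sum_k \beta_k e_k$ in the standard basis of $H_2(\CP2 \# N \barCP2; \Z)$. Using $c_1 = 3h - \sum_k e_k$, the adjunction equality $c_1\cdot [C_i^j] = [C_i^j]^2 + 2$ for a symplectic sphere reduces to the Diophantine identity
\[
(a-1)(a-2) = \sum_k \beta_k(\beta_k+1).
\]
The coefficient $a$ equals $[C_i^j]\cdot h$, i.e.\ the plumbing intersection number with the central vertex: $a=1$ when $i=1$ and $a=0$ when $i>1$. For $i=1$ the right side vanishes; since $\beta(\beta+1)\ge 0$ for every integer $\beta$, each $\beta_k\in\{0,-1\}$. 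This gives $[C_1^j] = h - \sum_k e_{m_k}^{1,j}$, and the self-intersection $-n_{1,j}$ forces the number of such $e$'s to be exactly $n_{1,j}+1$, matching the statement.

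For $i>1$ the identity reads $\sum_k \beta_k(\beta_k+1) = 2$, whose only integer solutions have exactly one $\beta_k\in\{1,-2\}$ contributing $2$ and all other $\beta_k\in\{0,-1\}$. The first case, $\beta_{m_0}=1$, yields the claimed form $[C_i^j]=e_{m_0}^{i,j} - \sum_{k\ge 1} e_{m_k}^{i,j}$, and the self-intersection $-n_{i,j}$ pins down the count of $-1$ coefficients at $n_{i,j}-1$. The main obstacle is to rule out $\beta_{m_0}=-2$, since adjunction alone permits it. I would do this via positivity of symplectic area: writing $[\omega] = \lambda h - \sum_k \mu_k e_k$ with $\lambda>0$ and each $\mu_k>0$ (the latter because each exceptional divisor is itself symplectic), one computes
\[
[\omega]\cdot [C_i^j] = \lambda a + \sum_k \mu_k \beta_k.
\]
For $a=0$ with $\beta_{m_0}=-2$ and the remaining $\beta_k\in\{0,-1\}$, the right side is strictly negative, contradicting that $C_i^j$ is symplectic. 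Thus $\beta_{m_0}=-2$ cannot occur, and the stated form follows. The delicate step in this plan is the last one: adjunction, McDuff, and intersection with the central sphere narrow the class down to two possible shapes, and it is positivity of symplectic area (rather than any purely homological consideration) that selects the correct one.
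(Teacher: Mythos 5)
Your proof is correct and takes essentially the approach the paper indicates: the paper defers the argument to \cite{St} (section 2.4), citing exactly the two ingredients you use, namely the adjunction equality $(a-1)(a-2)=\sum_k\beta_k(\beta_k+1)$ and the intersection pattern with the central sphere in class $h$. Your additional observation that positivity of symplectic area (each $e_k$ having positive $\omega$-area) is what rules out the $\beta_{m_0}=-2$ branch correctly closes the one gap that adjunction and the intersection form alone leave open.
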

	
	The following lemmas easily follow from the intersection relations and lemma \ref{l:homform}.
	
	\begin{lemma}\label{l:mixing}
		For each distinct pair $j,j'$, there is exactly one $e_x$ which appears with coefficient $-1$ in both $[C_1^j]$ and $[C_1^{j'}]$.
	\end{lemma}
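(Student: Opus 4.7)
The plan is to compute the intersection number $[C_1^j]\cdot [C_1^{j'}]$ in two different ways and equate them. Geometrically, the spheres $C_1^j$ and $C_1^{j'}$ lie on distinct arms of the star-shaped cap plumbing, so the only sphere they both meet is the central $+1$-sphere and they share no edge in the plumbing graph. Hence they are disjoint closed surfaces in the cap plumbing. A symplectic embedding into $\CP2\#N\barCP2$ is in particular an oriented topological embedding, so it preserves algebraic intersection numbers of homology classes. This gives $[C_1^j]\cdot [C_1^{j'}] = 0$.

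Algebraically, I would substitute the expressions from Lemma \ref{l:homform} and expand in the standard orthogonal basis of $H_2(\CP2\#N\barCP2)$, where $h\cdot h = 1$, $e_i\cdot e_i = -1$, and all other pairings vanish. Writing $[C_1^j] = h - \sum_a e_{m_a}^{1,j}$ and $[C_1^{j'}] = h - \sum_b e_{m_b}^{1,j'}$, the cross term $h\cdot h$ contributes $+1$, the mixed $h\cdot e$ terms vanish, distinct exceptional classes are orthogonal, and each exceptional class $e_x$ appearing in both expressions contributes $(-1)(-1)(e_x\cdot e_x) = -1$. The result is
$$[C_1^j]\cdot [C_1^{j'}] = 1 - k,$$
where $k$ is the number of exceptional classes $e_x$ that appear with coefficient $-1$ in both $[C_1^j]$ and $[C_1^{j'}]$.

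Equating the two computations forces $k = 1$, which is exactly the statement of the lemma. There is no serious obstacle here: the argument reduces to a single application of the intersection pairing once the geometric disjointness has been noted. The only detail to keep in mind is that every exceptional-class coefficient in Lemma \ref{l:homform} is exactly $-1$, so each shared class contributes exactly $-1$ to the intersection; this is built into the formula, so no separate check is needed.
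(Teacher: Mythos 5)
Your argument is exactly the one the paper intends: it states that Lemma \ref{l:mixing} ``easily follows from the intersection relations and Lemma \ref{l:homform},'' which is precisely your computation of $[C_1^j]\cdot[C_1^{j'}]=0$ geometrically (disjoint spheres in distinct arms) versus $1-k$ algebraically. The proposal is correct and matches the paper's approach.
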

	
	\begin{lemma}\label{l:first}
		The class of the exceptional sphere which appears with coefficient $+1$ in $[C_2^j]$ appears with coefficient $-1$ in sphere $[C_1^j]$ and no other exceptional class appears with non-zero coefficient in both.
	\end{lemma}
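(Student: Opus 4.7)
The plan is to compute the intersection number $[C_1^j]\cdot[C_2^j]$ directly from the expressions given by Lemma \ref{l:homform}, then exploit the fact that this pairing is forced to equal $+1$. Since $C_1^j$ and $C_2^j$ correspond to adjacent vertices in the star-shaped cap plumbing graph, their geometric intersection is a single transverse point, so the homological pairing must be exactly $1$ — no more, no less.

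Substituting the formulas
\[
[C_1^j] = h - e_{m_1}^{1,j} - \cdots - e_{m_{n_{1,j}+1}}^{1,j},\qquad [C_2^j] = e_{m_0}^{2,j} - e_{m_1}^{2,j} - \cdots - e_{m_{n_{2,j}-1}}^{2,j},
\]
into the standard intersection form on $\CP2\#N\barCP2$, the $h$-term contributes nothing since $[C_2^j]$ has zero coefficient on $h$. All other contributions come from exceptional classes appearing in both expressions, using $e_i\cdot e_i=-1$. Denote by $c\in\{0,-1\}$ the coefficient of $e_{m_0}^{2,j}$ in $[C_1^j]$, and for each $k\geq 1$ let $d_k\in\{0,-1\}$ be the coefficient of $e_{m_k}^{2,j}$ in $[C_1^j]$. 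A short bookkeeping then gives
\[
[C_1^j]\cdot[C_2^j] \;=\; -c + \sum_{k\geq 1} d_k,
\]
where $-c\in\{0,1\}$ and each $d_k\in\{-1,0\}$.

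Setting this equal to $1$ leaves only one possibility: $-c=1$ (so $c=-1$) and every $d_k=0$. The first condition says that $e_{m_0}^{2,j}$, the unique exceptional class with coefficient $+1$ in $[C_2^j]$, appears with coefficient $-1$ in $[C_1^j]$; the second condition says that none of the remaining exceptional classes in $[C_2^j]$ occur with nonzero coefficient in $[C_1^j]$. Together these give the lemma.

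There is no serious obstacle here: the proof is essentially a sign-tracking calculation. The only conceptual point worth highlighting is that the sign pattern in Lemma \ref{l:homform} — a single $+1$ coefficient on an exceptional class in $[C_2^j]$ and $-1$ coefficients elsewhere — combined with the universal sign on $-1$ coefficients in $[C_1^j]$, rigidly constrains how the intersection pairing can realize the value $1$. This rigidity is what forces the match to be unique.
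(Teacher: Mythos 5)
Your proof is correct and is exactly the argument the paper intends: the paper gives no explicit proof, stating only that these lemmas ``easily follow from the intersection relations and lemma \ref{l:homform},'' and your sign-tracking computation of $[C_1^j]\cdot[C_2^j]=1$ using the forms from Lemma \ref{l:homform} is precisely that derivation.
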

	
	\begin{lemma}\label{l:consecutive}
		For $i>1$, either the exceptional class with coefficient $+1$ in $[C_i^j]$ appears with coefficient $-1$ in $[C_{i+1}^j]$ or the exceptional class with coefficient $+1$ in $[C_{i+1}^j]$ appears with coefficient $-1$ in $[C_i^j]$, or both. In particular, they do not share the same exceptional class with $+1$ coefficient. Furthermore, the exceptional classes which appear with coefficients $-1$ in $[C_i^j]$ are disjoint from those which appear with coefficient $-1$ in $[C_{i+1}^j]$ unless both conditions in the first sentence are satisfied, in which case they share exactly one exceptional class with $-1$ coefficient in common.
	\end{lemma}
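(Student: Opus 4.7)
The key input is that $C_i^j$ and $C_{i+1}^j$ are adjacent in the cap plumbing, so they meet transversely in a single point, forcing $[C_i^j] \cdot [C_{i+1}^j] = 1$ in $\CP2 \# N\barCP2$. My plan is to expand this intersection pairing using the canonical form of the homology classes provided by Lemma \ref{l:homform}, and then extract the combinatorial content from the resulting equation.

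Writing
\[
[C_i^j] = e_a - \sum_{k} e_{b_k}, \qquad [C_{i+1}^j] = e_{a'} - \sum_{l} e_{b'_l},
\]
with pairwise distinct indices within each individual expression, and using $e_m \cdot e_n = -\delta_{m,n}$, the intersection number reads
\[
-\alpha + \beta + \gamma - \delta = 1,
\]
where $\alpha \in \{0,1\}$ records whether $a = a'$, $\beta \in \{0,1\}$ records whether $a$ equals some $b'_l$, $\gamma \in \{0,1\}$ is the symmetric condition with the two classes swapped, and $\delta \geq 0$ is the number of indices shared between $\{b_k\}$ and $\{b'_l\}$.

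The remainder is a short case check on $(\alpha,\beta,\gamma,\delta)$. Distinctness of indices within each expression forces $\alpha\beta = \alpha\gamma = 0$. If $\alpha = 1$ the equation demands $\delta = -2$, which is impossible, so in fact $\alpha = 0$; this gives the ``in particular'' clause that the two $+1$ classes cannot coincide. With $\alpha = 0$, the possibility $\beta = \gamma = 0$ is ruled out (it would force $\delta = -1$), so at least one of the conditions in the first sentence of the lemma must hold. The remaining cases $(\beta,\gamma) \in \{(1,0),(0,1)\}$ yield $\delta = 0$, and $(\beta,\gamma) = (1,1)$ yields $\delta = 1$, matching the final statement of the lemma exactly.

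I do not anticipate any real obstacle: given Lemma \ref{l:homform}, this is a self-contained bookkeeping argument. The only point that requires care is confirming that the expressions from Lemma \ref{l:homform} are fully reduced, i.e.\ the indices appearing within a single class are pairwise distinct — otherwise the ``$+1$ coefficient'' and ``$-1$ coefficient'' language would be ambiguous — but this distinctness is built into the statement of Lemma \ref{l:homform}.
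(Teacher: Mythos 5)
Your proof is correct and is exactly the argument the paper intends: the paper states this lemma without proof, remarking only that it "easily follows from the intersection relations and Lemma \ref{l:homform}," and your computation of $[C_i^j]\cdot[C_{i+1}^j]=1$ via the expansion $-\alpha+\beta+\gamma-\delta=1$ together with the case check is precisely that omitted bookkeeping. The one point you flag — distinctness of the exceptional indices within a single class — is indeed built into the form given in Lemma \ref{l:homform}, so there is no gap.
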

	
	\begin{lemma}\label{l:pos}
		If $e_x$ appears with coefficient $+1$ in $[C_i^j]$ then it does not appear with coefficient $+1$ in $[C_{i'}^{j'}]$ for any $(i',j')\neq (i,j)$.
	\end{lemma}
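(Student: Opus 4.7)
The plan is to proceed by contradiction and exploit positivity of intersections between the distinct symplectic spheres $C_i^j$ in $\CP2\#N\barCP2$. Since by Lemma \ref{l:homform} the class $[C_1^j]$ has no exceptional class appearing with coefficient $+1$ (only $h$ does), it suffices to handle the case $i,i'\geq 2$.

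First I would assume $e_x$ appears with coefficient $+1$ in both $[C_i^j]$ and $[C_{i'}^{j'}]$ for some $(i',j')\neq(i,j)$, so that in the notation of Lemma \ref{l:homform} we have $e_{m_0}^{i,j} = e_{m_0}^{i',j'} = e_x$. Then I would compute $[C_i^j]\cdot[C_{i'}^{j'}]$ directly using $h^2=1$, $e_a\cdot e_b = -\delta_{ab}$, and $h\cdot e_a = 0$. The leading term $e_x \cdot e_x$ contributes $-1$. The cross terms of the form $e_x \cdot (-e_{m_k}^{i',j'})$ and $(-e_{m_k}^{i,j}) \cdot e_x$ all vanish: within a single class $[C_i^j]$ the indices $m_0, m_1, \ldots$ are distinct (otherwise the self-intersection computation would not give the expected negative integer), so $e_x$ does not reappear among the $-1$-coefficient classes of its own sphere. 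The remaining cross terms $(-e_{m_k}^{i,j}) \cdot (-e_{m_{k'}}^{i',j'})$ each contribute $-1$ whenever two $-1$-coefficient classes coincide, and $0$ otherwise. Summing, I obtain
\[
[C_i^j]\cdot[C_{i'}^{j'}] \;=\; -1 \;-\; \#\{\text{exceptional classes shared with coefficient }-1\} \;\leq\; -1.
\]

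On the other hand, $C_i^j$ and $C_{i'}^{j'}$ are distinct symplectic spheres in $\CP2\#N\barCP2$ (they correspond to distinct vertices of the cap plumbing), so by positivity of intersections their algebraic intersection number equals the geometric intersection number, which is $0$ or $1$ depending on whether the corresponding vertices are joined by an edge in the plumbing graph. This directly contradicts the inequality above.

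The argument is essentially a one-line intersection computation; the only subtle point is to notice that within a single class $[C_i^j]$ the $+1$-coefficient class cannot coincide with any of its own $-1$-coefficient classes, which is implicit in the adjunction-based derivation of Lemma \ref{l:homform}. I do not anticipate any serious obstacle beyond bookkeeping.
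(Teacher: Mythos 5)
Your proof is correct and follows exactly the route the paper intends: the paper states that Lemma \ref{l:pos} ``easily follow[s] from the intersection relations and lemma \ref{l:homform}'' and omits the details, and your computation --- showing that a shared $+1$-coefficient exceptional class would force $[C_i^j]\cdot[C_{i'}^{j'}]\leq -1$, contradicting the fact that distinct cap spheres have intersection number $0$ or $1$ --- is precisely that argument. The reduction to $i,i'\geq 2$ and the observation that the exceptional classes within a single $[C_i^j]$ are distinct are both handled correctly.
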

	
%
	%
	%
	%
	
	\begin{lemma}\label{l:share2}
		If $e_x$ appears with nonzero coefficient in distinct classes $[C_i^j]$ and $[C_{i'}^{j'}]$ and it is not the case that $(i,j)=(i'\pm 1,j')$ or that $i=i'=1$, then we have one or both of the following two possibilities:
		\begin{enumerate}
			\item the exceptional class with coefficient $+1$ in $[C_i^j]$ appears with coefficient $-1$ in $[C_{i'}^{j'}]$
			\item the exceptional class with coefficient $+1$ in $[C_{i'}^{j'}]$ appears with coefficient $-1$ in $[C_i^j]$
		\end{enumerate}
		If only one of these possibilities holds then there is exactly one exceptional class which appears with coefficient $-1$ in both $[C_i^j]$ and $[C_{i'}^{j'}]$. If both (1) and (2) hold, then there are exactly two exceptional classes which appear with coefficient $-1$ in both.
	\end{lemma}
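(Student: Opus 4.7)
The key input is that the hypothesis on $(i,j)$ and $(i',j')$ is precisely the condition that $C_i^j$ and $C_{i'}^{j'}$ correspond to non-adjacent vertices of the cap plumbing graph (the excluded cases are consecutive vertices along one arm and the pair of vertices both at distance one from the central vertex). As embedded plumbing spheres they are then disjoint, so $[C_i^j]\cdot[C_{i'}^{j'}]=0$ in $\CP2\#N\barCP2$. The plan is to extract the entire conclusion from this single equation together with the structural form of Lemma \ref{l:homform} and the constraint on $+1$-positions from Lemma \ref{l:pos}.

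I would start by naming, for each of the two classes, the set $B$ of exceptional indices appearing with coefficient $-1$ and (when the subscript is $>1$) the unique exceptional index $a$ appearing with coefficient $+1$; write $(a',B')$ for the analogous data for the second class. When $i,i'>1$, expanding the intersection pairing using $e_k^2=-1$ and invoking Lemma \ref{l:pos} to rule out $a=a'$ yields
$$|B\cap B'|\;=\;[a\in B']+[a'\in B],$$
where $[\,\cdot\,]$ denotes the indicator. When $i=1$, there is no $a$ so condition (1) is vacuous, the $h$-terms contribute nothing against $[C_{i'}^{j'}]$ (which has no $h$-component), and the same expansion collapses to $|B\cap B'|=[a'\in B]$. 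The case $i'=1$ is symmetric.

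Next I would observe that a shared $e_x$ with nonzero coefficient in both classes lies in $(\{a\}\cup B)\cap(\{a'\}\cup B')$, and since $a\neq a'$ this splits into the three mutually exclusive possibilities $x=a\in B'$, $x=a'\in B$, or $x\in B\cap B'$. The counting identity forces at least one of $a\in B'$ or $a'\in B$ as soon as any shared $e_x$ exists, which is exactly the assertion that at least one of conditions (1), (2) holds. The classes appearing with coefficient $-1$ in both are precisely the elements of $B\cap B'$, and the identity pins $|B\cap B'|$ down to $1$ when exactly one of (1), (2) holds and to $2$ when both do, matching the final clause of the lemma.

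The only real care required is in the case split based on whether $i$ or $i'$ equals $1$ — the $+1$-coefficient then lives on $h$ rather than on an exceptional class, so the corresponding condition becomes vacuous — together with careful sign bookkeeping in the intersection expansion. Beyond this there is no conceptual obstacle: the entire lemma is forced by the single intersection identity $[C_i^j]\cdot[C_{i'}^{j'}]=0$ combined with the normal forms supplied by Lemmas \ref{l:homform} and \ref{l:pos}.
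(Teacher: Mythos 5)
Your argument is correct and is exactly the computation the paper has in mind: the paper gives no written proof, stating only that these lemmas ``easily follow from the intersection relations and Lemma \ref{l:homform},'' and your expansion of $[C_i^j]\cdot[C_{i'}^{j'}]=0$ into the identity $|B\cap B'|=[a\in B']+[a'\in B]$ (with the degenerate $i=1$ or $i'=1$ variants) is precisely that intended argument. One cosmetic slip: the hypothesis is not \emph{precisely} non-adjacency, since the excluded pairs with $i=i'=1$ are also non-adjacent --- they are set aside because both classes carry an $h$-term, so the $h\cdot h=1$ contribution changes the count (that case is Lemma \ref{l:mixing}) --- but your case analysis handles this correctly, so nothing is affected.
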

	
	
	With all of these restrictions on the homology embeddings, the different possibilities for the Euler characteristics of the corresponding fillings are determined by the varying ways that exceptional classes appear with non-zero coefficients in the homology classes of distinct spheres.
	
	\begin{proof}[Proof of theorem \ref{thm:maxE}]
		
		We will first describe the homological embedding of the cap plumbing such that its complement is the convex plumbing. 
		
		The following process produces a decomposition of a blow-up of $\CP2$ into two star-shaped plumbings, one the convex filling and the other the concave cap. More details can be found in \cite{SSW} or \cite[section 2.1]{St}. We will call the corresponding embedding of the cap the \emph{canonical cap embedding} and the induced map on homology, the \emph{canonical homological embedding}. Start with $\CP2\#\barCP2$, viewed as a sphere bundle over a sphere. Keep track of two sections - the $0$-section which has square $+1$ and is a complex projective line representing the homology class $h$, and the $\infty$-section which has square $-1$ and is an exceptional sphere representing the homology class $e_1$. Also keep track of $-e_0-1$ fiber spheres (necessarily of square $0$) which represent $h-e_1$. Then blow-up at intersections between each of the tracked fibers with the $\infty$-section so that its proper transform has self-intersection $e_0$. Keep track of the exceptional spheres and all proper transforms. Continue blowing up at points which lie on the intersection of an exceptional $(-1)$-sphere with one of the other spheres being tracked (there are two points on each exceptional sphere to choose from and the varying choices yield varying dual pairs of star-shaped plumbings). After blowing up we add the new exceptional spheres into the set of spheres we track along with the proper transforms of the spheres we were tracking before. The blow-ups create singular fibers which can each be cut along the most recently introduced exceptional sphere so that on one side we have the arms of the convex plumbing emanating from the proper transform of the $\infty$-section, and on the other side we have the arms of the cap plumbing emanating from the $0$-section sphere of square $+1$.
		
		The homology classes of the cap spheres in this construction can be easily computed. The spheres adjacent to the central $+1$ sphere represent the class $h-e_1-e_{i_1}-\dots - e_{i_n}$. They all share $e_1$ with coefficient $-1$, but the other $e_x$'s that appear with nonzero coefficient are all distinct. Subsequent spheres in the arms are the proper transforms of the exceptional sphere which intersects the previous sphere in that arm. Therefore, the spheres in the $j^{th}$ arm represent homology classes as follows.
		$$h-e_1-e_1^{1,j}-\cdots - e_{n_{1,j}}^{1,j}$$
		$$e_{n_{1,j}}^{1,j}-e_1^{2,j}-\cdots - e_{n_{2,j}}^{2,j}$$
		$$e_{n_{2,j}}^{2,j}-e_1^{3,j}-\cdots - e_{n_{3,j}}^{3,j}$$
		$$\vdots$$
		$$e_{n_{m-1,j}}^{m-1,j}-e_1^{m,j}-\cdots - e_{n_{m,j}}^{m,j}$$
		Here all $e_x^{i,j}$ are exceptional classes distinct from each other and from $e_1$. There are no exceptional sphere classes which appear with nonzero coefficient in more than one arm because the blow-ups are all done in distinct singular fibers which each correspond to distinct arms. The only exceptional classes besides $e_1$ that appear with nonzero coefficient in two different spheres are in adjacent spheres, and appear with coefficient $-1$ in the inner-more sphere and with coefficient $+1$ in the outer-more sphere.
		
		Now we will show that any other homological embedding of the cap plumbing uses strictly fewer distinct exceptional classes than the canonical homological embedding.
		
		First we focus on the homology classes of the spheres adjacent to the center $C_1^j$ (we will drop the brackets indicating homology class for readability). By Lemma \ref{l:mixing}, each pair of spheres intersecting the central $+1$ sphere in the cap, must have exactly one shared $e_i$ appearing with $-1$ coefficient in both. In the canonical embedding, they all share the same class, $e_1$. In an embedding where the $C_1^j$ did not all share the same class, there would necessarily be at least one sphere $C_1^{j_0}$ in which two exceptional classes $e_x$ and $e_y$ appear with $-1$ coefficient, where $e_x$ appears with $-1$ coefficient in $C_1^{j_1},\cdots , C_1^{j_n}$ and $e_y$ appears with $-1$ coefficients in a disjoint set of spheres $C_1^{j_{n+1}}, \cdots , C_1^{j_m}$. There is necessarily a third exceptional class $e_z$ which appears with $-1$ coefficient in $C_1^{j_1}$ and $C_1^{j_{n+1}}$. Now compare this to the homology embedding where $C_1^{j_0},C_1^{j_1},\cdots , C_1^{j_m}$ all share the same $e_z$ with coefficient $-1$ so $e_x$ and $e_y$ are eliminated. Then because the squares of the homology classes of $C_1^{j_0},C_1^{j_1},\cdots, C_1^{j_n},C_1^{j_{n+1}},\cdots, C_1^{j_m}$ are fixed, there must be new distinct exceptional classes $e_a,e_{b_1},\cdots, e_{b_m}$, with one appearing with coefficient $-1$ in each of these classes. Therefore decreasing the number of distinct $e_i$'s which are shared between the spheres adjacent to the central vertex, increases the total number of distinct exceptional classes appearing with nonzero coefficients in the embedding (we eliminated two exceptional class at the cost of adding $1+m\geq 3$ new ones).
		
		Next we consider classes of the spheres $C_i^j$ for $i>1$. In the canonical cap embedding, the only exceptional classes which appear with non-zero coefficient in more than one class other than the $C_1^j$ occurs between consecutive spheres within an arm where the class has coefficient $-1$ in $C_i^j$ and $+1$ in $C_{i+1}^j$. By Lemma \ref{l:consecutive}, there is always an exceptional class appearing with non-zero coefficient in both consecutive spheres in a given arm, but it is possible that it appears with coefficient $+1$ in $C_i^j$ and $-1$ in $C_{i+1}^j$. We claim that in this scenario there is necessarily some exceptional class which appears with non-zero coefficient in two classes of spheres which are not consecutive. Let $i_0$ be the largest value less than $i$ for which the exceptional class with coefficient $1$ in $C_{i_0+1}^j$ appears with coefficient $-1$ in $C_{i_0}^j$. By Lemma \ref{l:first}, $i_0$ is well-defined. Then for all $i_0<p\leq i$ there is an exceptional class $e_{k_p}$ with coefficient $1$ in $C_p^j$ and coefficient $-1$ in $C_{p+1}^j$. Therefore $e_{k_{i_0+1}}$ has coefficient $-1$ in $C_{i_0}$ and $C_{i_0+2}^j$, proving the claim.
		
		We conclude using Lemma \ref{l:share2} and the previous paragraph that any homological embedding that differs from the canonical embedding has a larger number of exceptional classes which appear more than once with non-zero coefficient. Because the classes of the cap spheres have the restricted form of Lemma \ref{l:homform} and the squares of those homology classes are fixed, the total number of exceptional classes which appear with non-zero coefficient (counted with multiplicity) is fixed. Thus the total number of \emph{distinct} exceptional classes is maximized when the number of exceptional classes appearing with non-zero coefficient more than once is minimized. This implies that the canonical homological embedding has the maximal number of distinct exceptional classes appearing with non-zero coefficient. Therefore the convex plumbing has the maximal Euler characteristic of any minimal symplectic filling of its contact boundary. To show that it is the unique filling realizing this maximum, we must verify that there is a unique isotopy class of embeddings of the cap plumbing inducing the canonical homological embedding. This follows from the arguments of \cite{St} Lemma 2.7 and 2.8 which we discuss in a more general form in the following section.
	\end{proof}
	
\section{Unique isotopy classes}
	
	 We discuss the isotopy classes of embeddings of the cap plumbing within a fixed homological embedding. While similar statements can be found in \cite{St} section 2.5, here we prove that there is a unique non-empty isotopy class for a larger class of homological embeddings (which is needed for the infinite family in the next section), and we refine the argument so that the isotopy equivalence is through symplectic configurations instead of only smooth ones. This allows us to conclude classification results of minimal fillings up to symplectic deformation equivalence and symplectomorphism instead of only up to diffeomorphism.
	 
	 Suppose we have an embedding of the cap plumbing $P'$ into $\CP2\#N\barCP2$. Choose an almost complex structure $J$ where the core spheres of the cap plumbing are $J$-holomorphic, and blow down $J$-holomorphic exceptional spheres until reaching $\CP2$. Then the homology classes of the spheres determine their images in $\CP2$ after blowing down. In particular, all of the spheres disappear except for those whose homology classes have non-zero coefficient for $h\in H_2(\CP2)$. Because of lemma \ref{l:homform}, this means we are left with the central $+1$ sphere, together with the images of $\{C_1^j\}_{j=1}^d$ under blowing-down, each of which becomes a symplectic sphere $\overline{C_1^j}$ in the class of the line $h$. The $J$-holomorphic exceptional spheres are disjoint from the central $+1$ sphere so its double-point intersections with $\overline{C_1^1},\cdots, \overline{C_1^d}$ are unaffected by blowing down, but the images any pair of the $\overline{C_1^j}$ in $\CP2$ intersect transversally in a single point, and collections $\overline{C_1^{j_1}},\cdots, \overline{C_1^{j_p}}$ intersect at a common intersection point precisely when the homology classes $[C_1^{j_1}],\cdots, [C_1^{j_p}]$ all shared the same exceptional class with coefficient $-1$ in the embedding into $\CP2\#N\barCP2$. Thus after blowing-down, we are left with a \emph{symplectic line arrangement} (a collection of symplectic spheres in $\CP2$ each in the homology class of the line which intersect pairwise once, but not necessarily generically). The combinatorics describing the point-line incidences of the symplectic line arrangement is determined by the homological embedding of the $C_1^j$ (specifically which classes had shared $e_i$). Note that in the plumbing embedding, because all the $[C_1^j]$ share the same exceptional class, after blowing down they will represent symplectic lines which all intersect at a single point (the central sphere will represent a symplectic line which intersects the others generically and all other spheres disappear under blowing down). This is the simplest case which is covered by the isotopy classifications in \cite{St} and the refinements below. 
	 
	 We say that an intersection point in a symplectic line arrangement is a \emph{multi-intersection} if there are $\geq 3$ symplectic lines passing through it. Note that for any symplectic line arrangement there is an almost complex structure tamed by the symplectic form making all the lines $J$-holomorphic because all the symplectic spheres intersect each other positively. (Standard arguments hold for configurations of symplectic surfaces which intersect positively and generically, and this can be extended to the present case by blowing up at the multi-intersections and then choosing $J$ and blowing back down.) We show that if there are no more than two multi-intersections on each line, there is a unique non-empty isotopy class of symplectic line arrangements.
	 
	 			\begin{prop}\label{l:JtoCx}
	 				Suppose $C^0,C^1,\cdots, C^d$ form a symplectic line arrangement which is $J_0$-holomorphic for some $J_0$ tamed by $\omega_{std}$ such that no $C^j$ contains more than two multi-intersection points. Then the spheres $C^0,C^1,\cdots, C^d$ can be isotoped to $J_1$-holomorphic lines for any $J_1$ tamed by $\omega_{std}$ through symplectic spheres such that the combinatorial intersection data of the arrangement remains unchanged throughout the isotopy.
	 			\end{prop}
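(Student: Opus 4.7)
The plan is to reduce Proposition \ref{l:JtoCx} to a parametric statement about moduli spaces of $J$-holomorphic spheres on a blow-up of $\CP2$ in which all remaining intersections are ordinary transverse nodes. First I would choose a smooth path $J_t$ of almost complex structures tamed by $\omega_{std}$ joining $J_0$ to $J_1$; such a path exists because the space of tame almost complex structures is contractible. Let $p_1,\ldots,p_m$ be the multi-intersection points of the arrangement $C^0,\ldots,C^d$, and blow up at each $p_i$ to obtain $\widetilde{X}=\CP2\#m\barCP2$ with exceptional spheres $E_1,\ldots,E_m$. Under the hypothesis that no $C^j$ passes through more than two multi-intersections, each proper transform $\tilde{C}^j$ has self-intersection $1-k_j\in\{-1,0,1\}$ and $c_1([\tilde{C}^j])=3-k_j\geq 1$. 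Lift $J_t$ to a smooth path $\tilde{J}_t$ of tame almost complex structures on $\widetilde{X}$ such that $\tilde{J}_0$ makes all the $\tilde{C}^j$ and $E_i$ simultaneously holomorphic.

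For each fixed $\tilde{J}_t$ I would argue that the full configuration exists and is essentially uniquely pinned down as a $\tilde{J}_t$-holomorphic configuration. The exceptional classes $[E_i]$ admit unique embedded $\tilde{J}_t$-holomorphic representatives by McDuff's theorem, and classical results on $J$-holomorphic spheres of self-intersection $\geq -1$ in rational surfaces produce an embedded $\tilde{J}_t$-holomorphic representative in each class $[\tilde{C}^j]$. The required combinatorial incidences---namely which $\tilde{C}^j$ meet each $E_i$ and which pairs $\tilde{C}^j,\tilde{C}^{j'}$ meet transversely at ordinary nodes---are forced by the homological intersection numbers together with positivity of intersection, since all relevant intersection numbers lie in $\{0,1\}$. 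After a small parametric perturbation of the path $\tilde{J}_t$ so that the parameterized moduli spaces cut out transversely, the implicit function theorem assembles these representatives into a smooth one-parameter family of symplectic configurations over $t\in[0,1]$. Blowing down the $E_i$ in a way compatible with $\tilde{J}_t$ then produces the desired symplectic isotopy of the original arrangement in $\CP2$, terminating at a $J_1$-holomorphic line arrangement with unchanged combinatorics.

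The main obstacle is ruling out bubbling as $\tilde{J}_t$ varies. If $\tilde{J}_{t_n}$-holomorphic representatives of some $[\tilde{C}^j]$ Gromov converge to a stable nodal curve, then by Gromov compactness the components represent effective classes summing to $[\tilde{C}^j]=h-\sum E_i^{(j)}$, each with positive symplectic area and with non-negative pairings against the other curves in the configuration. Since $c_1([\tilde{C}^j])=3-k_j$ is small and every effective summand has $c_1\geq 1$, a direct case analysis on the few possible splittings in $H_2(\widetilde{X})$ shows that any nontrivial decomposition would either include a sphere that intersects some other $\tilde{C}^{j'}$ with the wrong sign, or create additional incidences contradicting the fixed combinatorics of the original $\tilde{J}_0$-holomorphic configuration, or violate the adjunction inequality for a smooth sphere. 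This step---eliminating bubbling uniformly along the path $t\in[0,1]$ while verifying that the combinatorial pattern is preserved---is the technical heart of the argument and parallels the corresponding analyses in Lemmas~2.7 and~2.8 of \cite{St}, which here must be reworked so that the isotopy is realized through symplectic (not merely smooth) configurations.
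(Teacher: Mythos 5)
Your route differs substantially from the paper's: the paper works directly in $\CP2$, using Gromov's theorem that two points determine a unique $J$-holomorphic line, pins each sphere by two marked points (always including its multi-intersection points, which is where the hypothesis of at most two such points enters), and then spends most of its effort explicitly perturbing marked points to avoid degenerations via codimension counts. Your blow-up reformulation is attractive because upstairs the prescribed incidences really are encoded homologically, but as written it has two genuine gaps.

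First, positivity of intersection upstairs forces the \emph{pairwise} incidence data ($\tilde{C}^j\cdot\tilde{C}^{j'}\in\{0,1\}$ and $\tilde{C}^j\cdot E_i\in\{0,1\}$), but it does not prevent three or more proper transforms that pairwise meet in ordinary nodes from becoming \emph{concurrent} at some time $t$: three spheres meeting pairwise once may all pass through a common point without violating any homological or positivity constraint. Blowing down, this creates a new multi-intersection point not present in the original arrangement, so the combinatorial intersection data changes during the isotopy, which is exactly what the proposition forbids (and what the application to Proposition \ref{lemma:configconnected} needs). Ruling this out is the technical heart of the paper's proof (the perturbations avoiding $J_t$-collinearity of the $p_i$, the points $q^{i,j,k,l}_t$, and the final codimension-two argument), and your proposal does not address it. Relatedly, representatives of the classes of square $0$ and $+1$ are not unique for fixed $\tilde{J}_t$, so your one-parameter family is not canonically defined; pinning those curves down requires auxiliary marked points, which reintroduces precisely the degeneration issues above. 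Second, your bubbling analysis rests on the claim that every effective summand has $c_1\geq 1$; this is false for non-generic almost complex structures (e.g.\ classes of the form $E_i-E_{i'}$ have $c_1=0$, can have positive area, and can be $J$-holomorphic for special $J$ in a path), so the case analysis excluding reducible degenerations of $[\tilde{C}^j]$ needs a different argument. The existence of embedded representatives for \emph{all} tame $J$ in these low classes can be established (and the paper sidesteps the issue entirely by quoting Gromov's theorem in $\CP2$ itself), but as stated your justification does not go through.
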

	 			
	 			 In particular letting $J_1$ be the standard complex structure, any symplectic line arrangement where each line contains at most two multi-intersections is isotopic to a complex line arrangement.
	 
	 			\begin{proof}
	 				Because $J_0$ and $J_1$ are both tamed by $\omega_{std}$, and the space of such $J$ is contractible and thus connected, there exists an interpolating family $\{J_t\}$. A theorem of Gromov \cite{Gromov} states that for any $J$ tamed by the standard symplectic structure on $\CP2$, any two points $v_1\neq v_2\in \CP2$ lie on a unique non-singular rational (i.e. diffeomorphic to $S^2$) $J$-holomorphic curve homologous to $\CPo\subset \CP2$. By fixing two points on each sphere and considering the family of $J_t$-holomorphic spheres through those two points, we get an isotopy $C^0_t,C^1_t,\cdots , C^d_t$ from the original embedded $J$-holomorphic spheres to complex projective lines. During this isotopy we fix exactly two points on each sphere. If at a multi-intersection point, we choose to fix that point on every sphere passing through it, then that intersection is preserved (though potentially other spheres may pass through that point during the isotopy). Using the hypothesis, we will fix the multi-intersection points in this way. Thus we can choose the isotopy so that at worst the intersection configuration becomes more degenerate. We will now discuss how to modify this isotopy to one which preserves the intersection configuration throughout.
	 				
	 				The spheres which have two fixed multi-intersection points are rigidly determined by $J_t$ and the placement of those multi-intersection points. We begin the isotopy by making small perturbations of the multi-intersection points to avoid degenerations of the intersections between these rigid spheres. The first phenomenon which we need to avoid, is that three of the multi-intersection points could become $J_t$-collinear for some $t>0$, meaning that they all lie on the same $J_t$-holomorphic line. The second phenomenon we need to avoid is a generic intersection between two of the rigid spheres could pass onto a third rigid sphere. Choose an ordering of the multi-intersection points, $p_1,p_2,p_3,\cdots, p_z$. Leaving $p_1$ and $p_2$ fixed, consider the union of the $J_t$-holomorphic lines through $p_1$ and $p_2$ over all $0\leq t\leq 1$. This subset of $\CP2$ will have at least codimension one. Therefore, if $p_3$ lies in this subset, it can be perturbed slightly to $p_3'$ so that it no longer is $J_t$-collinear with $p_1$ and $p_2$ for any $t$. Define an isotopy of the curves $C^0,C^1,\cdots, C^d$ by choosing a path $\gamma(s)$ from $p_3$ to $p_3'$. For each sphere $C^j$ passing through $p_3$, isotope $C^j$ by defining $C^j_s$ as the unique $J_0$-holomorphic sphere through the other chosen fixed point which is not $p_3$ and $\gamma(s)$. For each sphere that does not pass through $p_3$, fix it throughout this isotopy. By choosing the perturbation $p_3'$ sufficiently close to $p_3$, we can ensure that the incidences of $C^0,C^1,\cdots, C^d$ are unchanged during this small isotopy. Next consider the space of $J_t$ lines through $p_1$ and $p_2$, $p_2$ and $p_3$, and $p_1$ and $p_3$ for all $t$. Again this subspace has at least codimension one, so $p_4$ can be perturbed to avoid this subspace and a corresponding isotopy of the $C^j$ can be defined to preserve the incidences. Now some additional points of interest appear. Let $q^{i,j,k,l}_t$ denote the intersection between the $J_t$ line through $p_i$ and $p_j$ and the $J_t$ line through $p_k$ and $p_l$. We proceed to isotope $p_N$ for $N\geq 5$ as before, except we want to also avoid an additional codimension one subspace to ensure that lines through the various $p_a$ avoid the $q^{i,j,k,l}_t$. The space of points which lie on a $J_t$ line through one of the $q^{i,j,k,l}_t$ and one $p_a$ for $a<N$ and some $t\in [0,1]$ has at least codimension one so $p_N$ can be perturbed an arbitrarily small amount to avoid it.
	 				
	 				Now, after any needed isotopies in the previous step, append the isotopy $C^0_t$, $C^1_t, \cdots, C^d_t$ defined by fixing two points on each of (the newly isotoped) $C^0,C^1,\cdots, C^d$, including all of the multi-intersection points and considering the unique $J_t$-holomorphic line through those two points. Suppose $C^{j_0}_t$ passes through the intersection of spheres $C^{j_1}_t,\cdots, C^{j_p}_t$ for $t\in \mathbf{T}\subset I$ ($p\geq2$) when it shouldn't according to the original intersection configuration. Parameterize a neighborhood of this intersection point on $C^{j_1}_t$ by a small disk centered on the intersection point by $\eta(r,\theta)$.  By the previous paragraph $C^{j_0}_0$ passes through at most one multi-intersection. Let $p$ denote a point fixed on $C^{j_0}$ away from $C^{j_1}_t$. Define $C^{j_0}_{t,r,\theta}$ to be the unique $J_t$-holomorphic sphere through the point $\eta(r,\theta)$ and $p$. By choosing $\varepsilon$ sufficiently small, we can ensure that no new degeneracies of the configuration are introduced away from the degeneracy we are focusing on. For each fixed $t$ and corresponding almost complex structure $J_t$ there is a unique point on $C^{j_1}_t$ which we are trying to make $C^{j_0}_t$ avoid: its intersection with $C^{j_2}_t,\cdots, C^{j_n}_t$. Thus the degenerate configurations occur in a codimension two subset of the cylinder $(t,r,\theta)$, so we can choose a path which avoids this.
	 				
	 				Morally, we can perturb the line in a real $2$-dimensional space (because we are only trying to fix at most one multi-intersection point on the complex line) and the degeneracies we want to avoid are $0$-dimensional, so we can find a 1-parameter family of configurations which avoid these degeneracies. We repeat this for other spheres contributing to degeneracies and eventually find an isotopy from the $J_0$-holomorphic configuration we started with (at $r=t=0$) to a $J_{std}$-holomorphic (complex) configuration (at $t=1$).
	 				
	 			\end{proof}
	 			
	 			Finally, we show there is a unique isotopy class of $J$-holomorphic configurations in given combinatorial arrangements of the types covered by Proposition \ref{l:JtoCx}.
	 			
	 			\begin{prop} Fix an almost complex structure $J$ on $\CP2$ tamed by the standard symplectic structure. Then the space of $J$-holomorphic lines with a fixed incidence configuration with the property that no line contains more than two multi-intersection points is path-connected and non-empty. \label{lemma:configconnected} \end{prop}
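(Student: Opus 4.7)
The plan is to first exhibit a $J$-holomorphic realization and then connect any two realizations by a continuous path of $J$-holomorphic arrangements preserving incidences, using Gromov's uniqueness theorem together with a codimension argument in the spirit of Proposition \ref{l:JtoCx}.

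For non-emptiness, I would first build a complex line arrangement realizing the prescribed combinatorics: place the multi-intersection points in general position in $\CP2$, draw the unique complex line through each prescribed pair of multi-intersections, and for each remaining line passing through fewer than two multi-intersections choose its additional parameters generically so the desired incidence pattern holds. Generic position ensures no unintended collinearity, no coincident double intersections, and no extra multi-intersections. Proposition \ref{l:JtoCx} then transports this complex model to a $J$-holomorphic realization with the same incidence configuration.

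For path-connectedness, I would coordinatize any $J$-holomorphic realization by choosing two marked points on each line: the two multi-intersection points when the line contains exactly two, a multi-intersection point together with one auxiliary point when it contains one, and two auxiliary points otherwise. By Gromov's theorem \cite{Gromov}, each line is then uniquely reconstructed as the $J$-holomorphic $\CPo$ through its two marked points. The hypothesis that no line contains more than two multi-intersections is essential here, since otherwise a line would be rigidly over-determined by its multi-intersections and there would be no coordinates in which to move.

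Given two realizations $\mathcal{A}_0$ and $\mathcal{A}_1$ sharing the same combinatorics, I would connect each marked point of $\mathcal{A}_0$ to the corresponding marked point of $\mathcal{A}_1$ by a path in $\CP2$ and reconstruct the arrangement continuously in $t$ via Gromov's theorem. The main technical obstacle is preserving the combinatorial incidence data throughout: we must avoid three multi-intersections becoming $J$-collinear on some additional line of the arrangement, an auxiliary point wandering onto a line it is not supposed to lie on, and double intersections either colliding with one another or running into a multi-intersection. Each such degeneration is a closed, real-codimension-two condition on the static parameter space of marked points, hence codimension-one in a one-parameter family. Exactly as in the proof of Proposition \ref{l:JtoCx}, we handle these degenerations by perturbing the marked points and their connecting paths one at a time by arbitrarily small amounts so as to slip off the forbidden loci, yielding a path of $J$-holomorphic realizations from $\mathcal{A}_0$ to $\mathcal{A}_1$ with combinatorics preserved throughout. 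Establishing this codimension-based avoidance is the technical heart of the argument; the rest is bookkeeping.
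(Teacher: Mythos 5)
Your proposal is correct and follows essentially the same route as the paper: Gromov's uniqueness theorem to coordinatize each line by two marked points, with the degenerate configurations forming a codimension $\geq 2$ subset of the resulting parameter space, so that the good locus is non-empty and path-connected. The only cosmetic difference is that you obtain non-emptiness by building a complex model and transporting it via Proposition \ref{l:JtoCx}, whereas the paper reads non-emptiness directly off the same codimension count.
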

	 			
	 			\begin{proof}
	 				We use the fact that there is a unique $J$-holomorphic line through any two points \cite{Gromov}. The configuration space of possibilities for the multi-intersection points $p_1,\cdots, p_z$ is parameterized by the complement of a co-dimension $\geq 2$ subset in $(\CP2)^{\times z}$ (as in the last part of the proof of the previous lemma but without the additional $t$ parameter). For fixed $p_1,\cdots , p_z$, the lines through two of the $p_i$ are determined so the space of possibilities for these lines is a single point. The space of possibilities for $J$ lines passing through one (respectively none) of the $p_i$ is non-empty, path-connected and real $2$-dimensional (respectively real $4$-dimensional), and the more degenerate configurations have co-dimension at least $2$.
	 			\end{proof}

\section{An infinite family of star surgeries inequivalent to any sequence of rational blowdowns}
\label{s:notQbd}
	\begin{figure}
		\centering
		\captionsetup[subfigure]{width=1.5in}
		\subfloat[Filling graph]{\label{fig:gex1}\includegraphics[scale=.6]{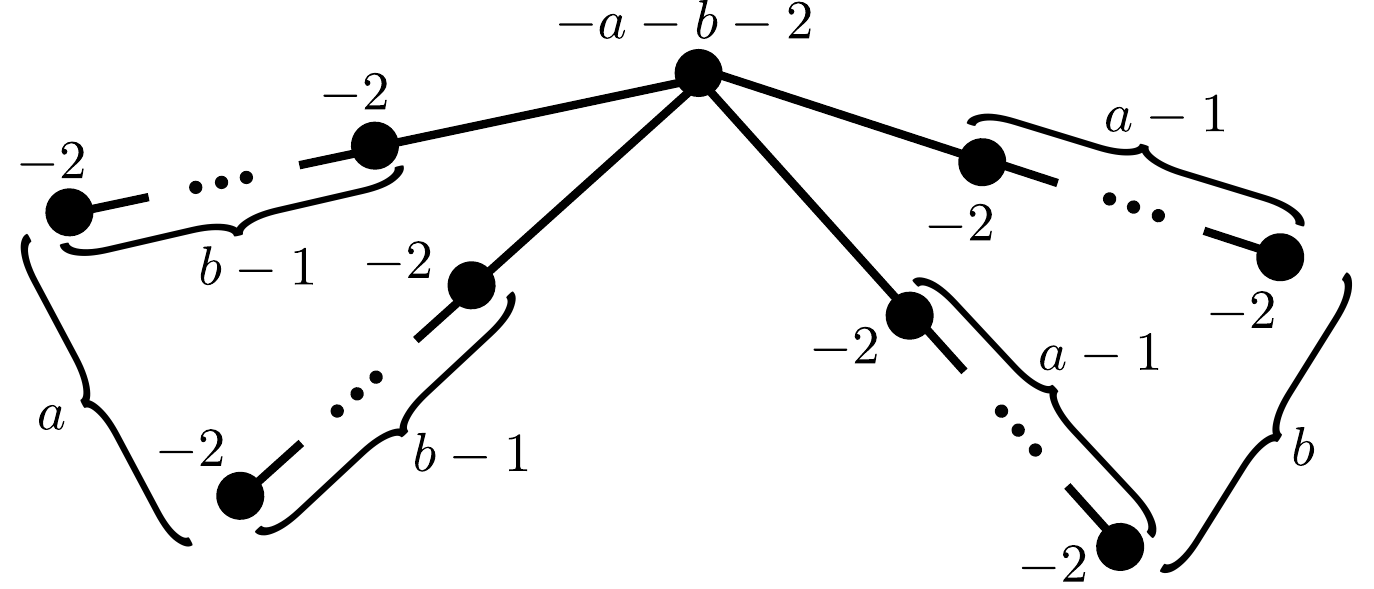}}\hspace{.5in}
		\subfloat[Cap graph]{\label{fig:dgex1}\includegraphics[scale=.6]{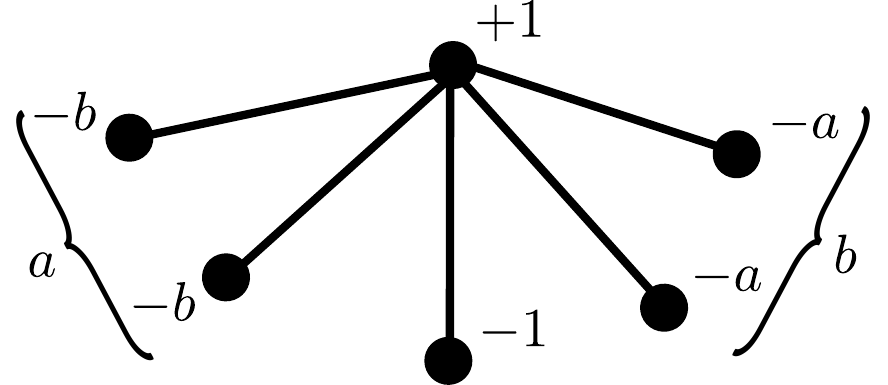}}
		\caption{Graphs $\Gamma_{a,b}$ for the convex plumbing and $D\Gamma_{a,b}$ for the cap plumbing.}
	\end{figure}
	
	In this section, we will show that the plumbings according to the graphs in figure \ref{fig:gex1} each have a star-surgery operation which for infinitely many values of $a,b$ we can show is not equivalent to any sequence of symplectic rational blow-downs (where here symplectic rational blow-down includes Fintushel and Stern's original family, Park's generalization, as well as the further negative definite examples classified in \cite{SSW} and \cite{BS}). This is in contrast to the results of \cite{BO} which show that all fillings of lens spaces are obtained from a linear plumbing by a sequence of rational blow-downs. The following is a more precise version of theorem \ref{thm:infinite}.

	\begin{theorem}
		The convex boundary $(Y,\xi_{can})$ of the plumbing of spheres $P_{a,b}$, plumbed according to the graph $\Gamma_{a,b}$ in figure \ref{fig:gex1}, has exactly two minimal strong symplectic fillings. One is the plumbing itself and the other has Euler characteristic $ab-a-b+2$. If $ab+1$ is not divisible by $a+b$, then the filling of Euler characteristic $ab-a-b+1$ cannot be obtained from the plumbing by any sequence of symplectic rational blow-downs.
	\end{theorem}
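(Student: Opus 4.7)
The plan is to first classify all minimal strong symplectic fillings of $(Y,\xi_{can})$ using the cap-embedding machinery from Sections 2--4, and then show that the unique non-plumbing filling cannot be obtained from $P_{a,b}$ by any sequence of symplectic rational blow-downs.

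For the classification, I would enumerate the homological embeddings of the cap plumbing $D\Gamma_{a,b}$ into $\CP2 \# N\barCP2$ using the constraints of Lemmas~\ref{l:homform}--\ref{l:share2}. A direct case analysis, organized by which exceptional class is shared among the classes $[C_1^j]$ of spheres adjacent to the central $+1$ sphere and by how consecutive classes $[C_i^j]$, $[C_{i+1}^j]$ along each arm exchange their $\pm 1$ coefficients, should yield exactly two combinatorial possibilities: the canonical embedding of Theorem~\ref{thm:maxE}, whose complementary filling is $P_{a,b}$, and one alternate embedding. Counting distinct exceptional classes $N$ and applying $\chi = 2 + N - |D\Gamma_{a,b}|$ then gives $\chi = ab-a-b+2$ for the alternate filling $W'$, which is manifestly distinct from $P_{a,b}$ since it has strictly smaller $b_2$. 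Propositions~\ref{l:JtoCx} and~\ref{lemma:configconnected} upgrade each homological embedding to a unique symplectic isotopy class, once one verifies from the explicit combinatorics of $D\Gamma_{a,b}$ that no line in the associated line arrangement meets more than two multi-intersection points.

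For the obstruction to rational blow-down, I would compute $|H_1(W')| = a+b$ from a Kirby calculus description of $W'$ (read off from the complement of the alternate cap embedding in $\CP2 \# N\barCP2$); a direct determinant computation for the intersection form of $P_{a,b}$ gives $|H_1(Y)| = ab+1$. Any generalized symplectic rational blow-down, by the classifications of \cite{SSW} and \cite{BS}, replaces a negative definite plumbing of discriminant $d^2$ by a rational homology ball whose first homology is $\Z/d$. Thus a sequence of such moves starting from the simply connected plumbing $P_{a,b}$ produces a filling whose torsion first homology is assembled, via iterated Mayer--Vietoris, from the cyclic factors $\Z/d_i$ of the balls glued in. The main obstacle, which is the crux of the argument, is to organize this Mayer--Vietoris bookkeeping tightly enough to conclude that $\Z/(a+b)$ can appear as a subquotient of the resulting $H_1$ only when $a+b \mid ab+1$; the hypothesis $a+b \nmid ab+1$ then precludes $W'$ from occurring in any such sequence. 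The delicate point is that later blow-downs may use configurations invisible to earlier stages, so one must show the divisibility constraint is preserved at each step rather than just globally; I expect this to follow by tracking at each stage $i$ that $d_i$ divides the order of $H_1$ of the lens space boundary of the configuration being excised, which itself is constrained by $|H_1(Y)| = ab+1$ via the surjectivity of $H_1(\partial C_i) \to H_1(C_i)$ implied by simple connectivity of the successive ambient fillings.
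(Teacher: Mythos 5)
Your first half tracks the paper's argument closely: the paper likewise enumerates homological embeddings of the cap $D\Gamma_{a,b}$ (using the $(-1)$-sphere in the cap together with Lemmas~\ref{l:homform} and~\ref{l:mixing} to cut the possibilities down to exactly two), computes $N=ab+2$ for the alternate embedding to get $\chi=ab-a-b+2$, and invokes Propositions~\ref{l:JtoCx} and~\ref{lemma:configconnected} for uniqueness of the isotopy class, noting that only two exceptional classes appear in more than two sphere classes so each line meets at most two multi-intersections. That part of your plan is fine.

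The second half is where the proposal has a genuine gap, and in fact the step you yourself flag as ``the crux'' is exactly the content that is missing; a proof cannot defer it. Three concrete problems. First, your numerical inputs are mutually inconsistent: for a filling $W$ of a rational homology sphere with $b_1(W)=b_2^+(W)=b_3(W)=0$, one has $|H_1(\partial W)|=|\det Q_W|\cdot|\mathrm{Tors}\,H_1(W)|^2$, so the two claims $|H_1(Y)|=ab+1$ and $|H_1(W')|=a+b$ would force $(a+b)^2\mid ab+1$, which already fails for $a=b=2$. The quantity $ab+1$ in the theorem is not $|H_1(Y)|$ (which is a much larger number depending on the arm weights); in the paper it arises as one more than the number of vertices of the configuration being blown down, via the Euler characteristic drop $\chi(P_{a,b})-\chi(L_{a,b})=ab$. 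Second, you miss the simplification that makes the whole problem tractable: since there are only two minimal fillings and a rational blow-down strictly decreases $\chi$, any sequence from $P_{a,b}$ to $L_{a,b}$ must be a \emph{single} blow-down, so there is no iterated Mayer--Vietoris bookkeeping to do at all. Third, even for a single blow-down your $H_1$-based obstruction is not what works; the paper's mechanism is the adjunction formula inside $P_{a,b}$: writing $[S]=\sum a_i[S_i]$ for any embedded symplectic sphere gives $[S]^2+2=\langle c_1(\omega),[S]\rangle=-(a+b)a_0$, so every sphere of a candidate configuration must have $[S]^2+2$ divisible by $a+b$. This kills the three- and four-armed plumbings of \cite{BS} (which contain $(-3)$-, $(-4)$-, or $(-6)$-spheres), and for Park's linear plumbings, whose weights are $(-4-m_1),(-2-m_2),\dots,(-2-m_n)$ plus $(-2)$'s with $1+\sum_j m_j$ vertices, divisibility forces $a+b\mid 2+\sum_j m_j$ while the vertex count forces $ab=1+\sum_j m_j$, whence $a+b\mid ab+1$. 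Without this adjunction/divisibility input (or a correct substitute), your argument does not close.
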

	
		Note that $ab+1$ is divisible by $a+b$ if and only if $b^2-1$ is divisible by $a+b$ since $ab+1=(a+b)b-(b^2-1)$. In particular, if $a>b^2-b-1$ the condition holds, so there are infinitely many pairs $(a,b)$ which satisfy this condition.

	\begin{proof}
	
		The graph for the cap plumbing is given in figure \ref{fig:dgex1}. Let $C_0$ denote the corresponding central $+1$ sphere, $C_1,\cdots, C_a$ denote the $a$ $(-b)$-spheres, $C_{a+1}$ the unique $(-1)$-sphere and $C_{a+2},\cdots, C_{a+b+1}$ the $b$ $(-a)$-spheres. The existence of the $(-1)$ sphere in the cap strongly restricts the homological embedding possibilities. There are only two possible homological embeddings satisfying Lemmas \ref{l:homform} and \ref{l:mixing}. One is the canonical homological embedding for the complement of the convex plumbing. The other is induced by the proper transform of the blow-up of the line arrangement in figure \ref{fig:lines} given as follows:
		
		\begin{eqnarray*}
			[C_0] &=& h\\
			\left[C_1\right] &=& h-e_1-e_{1,1}-\cdots - e_{1,b}\\
			 &\vdots&\\
			\left[C_a\right] &=& h-e_1-e_{a,1}-\cdots - e_{a,b}\\
			\left[C_{a+1}\right]&=& h-e_1-e_2\\
			\left[C_{a+2}\right]&=& h-e_2-e_{1,1}-\cdots - e_{a,1}\\
			&\vdots&\\
			\left[C_{a+b+1}\right]&=& h-e_2-e_{1,b}-\cdots - e_{a,b}\\
		\end{eqnarray*}
		
		\begin{figure}
			\centering
			\includegraphics[scale=.4]{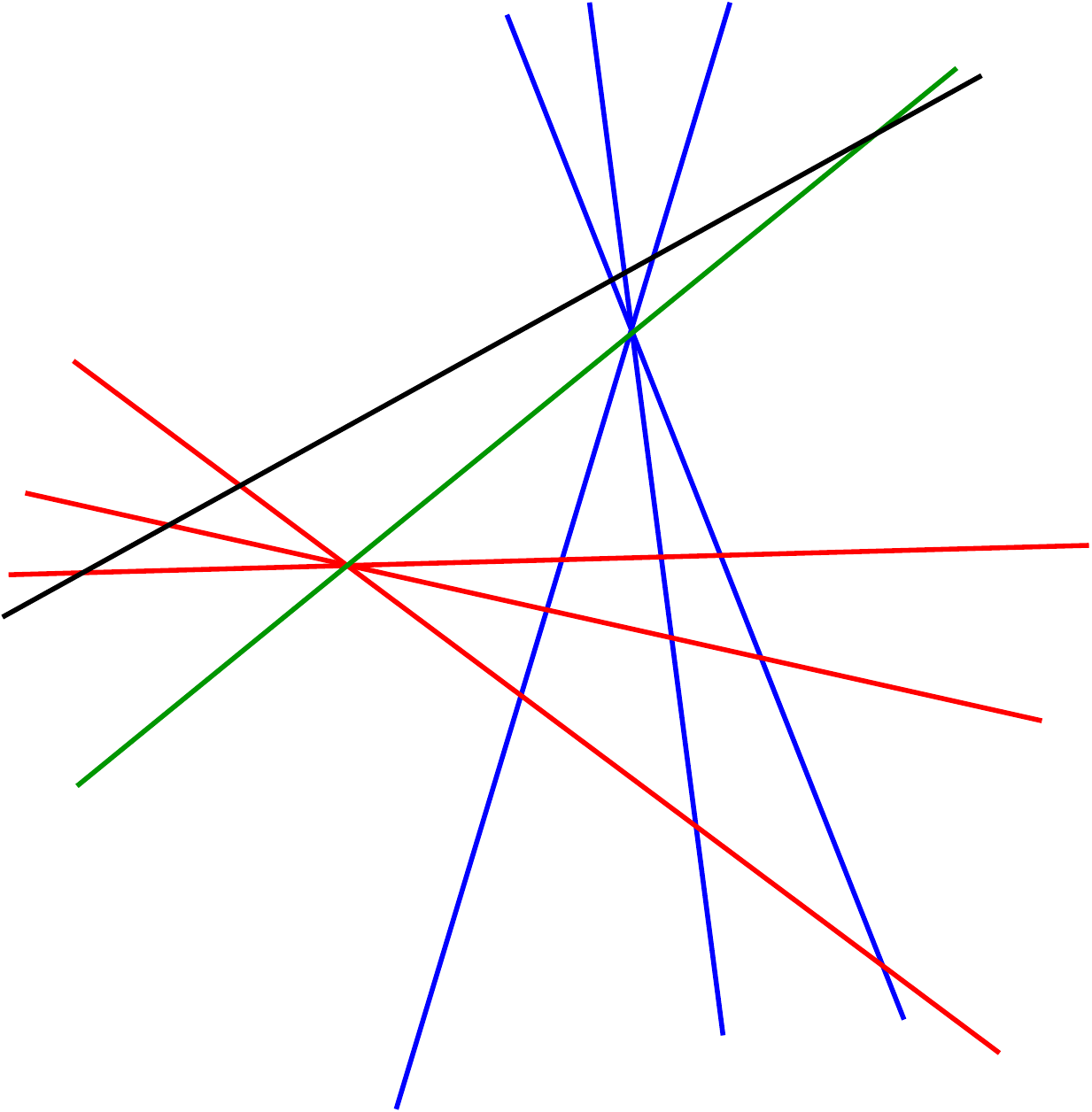}
			\caption{A line configuration with $a$ red lines intersecting at a point $p$, $b$ blue lines intersecting at a point $q$, one green line passing through both $p$ and $q$ and one generic black line. After blowing up at the the intersections between the blue, red, and green lines we get a homological embedding of the $D\Gamma_{a,b}$ plumbing where the proper transforms of the red lines become $C_1,\cdots, C_a$, the green becomes $C_{a+1}$ and the blue become $C_{a+2},\cdots, C_{a+b+1}$.}
			\label{fig:lines}
		\end{figure}
		
		Because there are only two exceptional classes which appear with non-zero coefficient in more than two sphere classes, Propositions \ref{l:JtoCx} and \ref{lemma:configconnected} imply there is at most one isotopy class of symplectic embeddings for each of these two homological embeddings. Therefore there are at most two symplectic deformation classes of convex symplectic fillings of the canonical contact boundary of this plumbing. The above homological embedding involves $ab+2$ exceptional classes, so the Euler characteristic of the complement to such an embedding is $ab-a-b+2$
		
		A smooth embedding of the cap plumbing realizing this homological embedding into $\CP2 \# (ab+2)\barCP2$ is given in figure \ref{fig:Emb}, along with a diagram for the complement of this embedding. This handlebody diagram naturally supports a Lefschetz fibration whose fibers are $a+b+1$ holed disks and whose vanishing cycles are given by the attaching curves for the $-1$ framed $2$-handles (see conventions below). The boundary open book decomposition can be shown to agree with the boundary open book decomposition of the canonical Lefschetz fibration for the plumbing graph of figure \ref{fig:gex1} by Lemma \ref{lem:MCGe0neg}.
		
		\subsection{Conventions on monodromy factorizations and Lefschetz fibrations}
		{	
		Let $\phi_{c}$ denote a positive Dehn twist around $c$. The product $\phi_{c_1}\phi_{c_2}\cdots \phi_{c_n}$ means first Dehn twist along $c_1$, then $c_2$, and so on until finally along $c_n$. When the fiber is a disk with holes, we can place the holes along a circle concentric with the bounday of the disk. Labeling the holes $\{1,\cdots, m\}$ counterclockwise, we use the notation $\phi_{i_1,\cdots, i_k}$ for $i_1,\cdots, i_k\in \{1,\cdots , m\}$ to indicate a positive Dehn twist about a curve which convexly contains the holes $i_1,\cdots, i_k$.
		
		Planar Lefschetz fibrations have a natural handle decomposition where the holes are represented by dotted circles forming a trivial braid corresponding to 1-handles and the vanishing cycles correspond to 2-handles. We view the holed-disk fibers as orthogonal to the dotted circles, oriented so that the outward normal defining orientation points downward. Then the monodromy factorization $\phi_{c_1}\cdots \phi_{c_n}$ corresponds to the Lefschetz fibration where the vanishing cycle $c_1$ appears at the top and $c_n$ at the bottom (though these vanishing cycles lie on the upside-down disk). To draw the handlebody, we will isotope the holes on the disk so that they all lie on the bottom half of the disk along a circle concentric to the boundary. An example, using the top to bottom convention where the outward normal to the disk points downward, is in figure \ref{fig:LFconv}.
		
		\begin{figure}
			\centering
			\includegraphics[scale=.7]{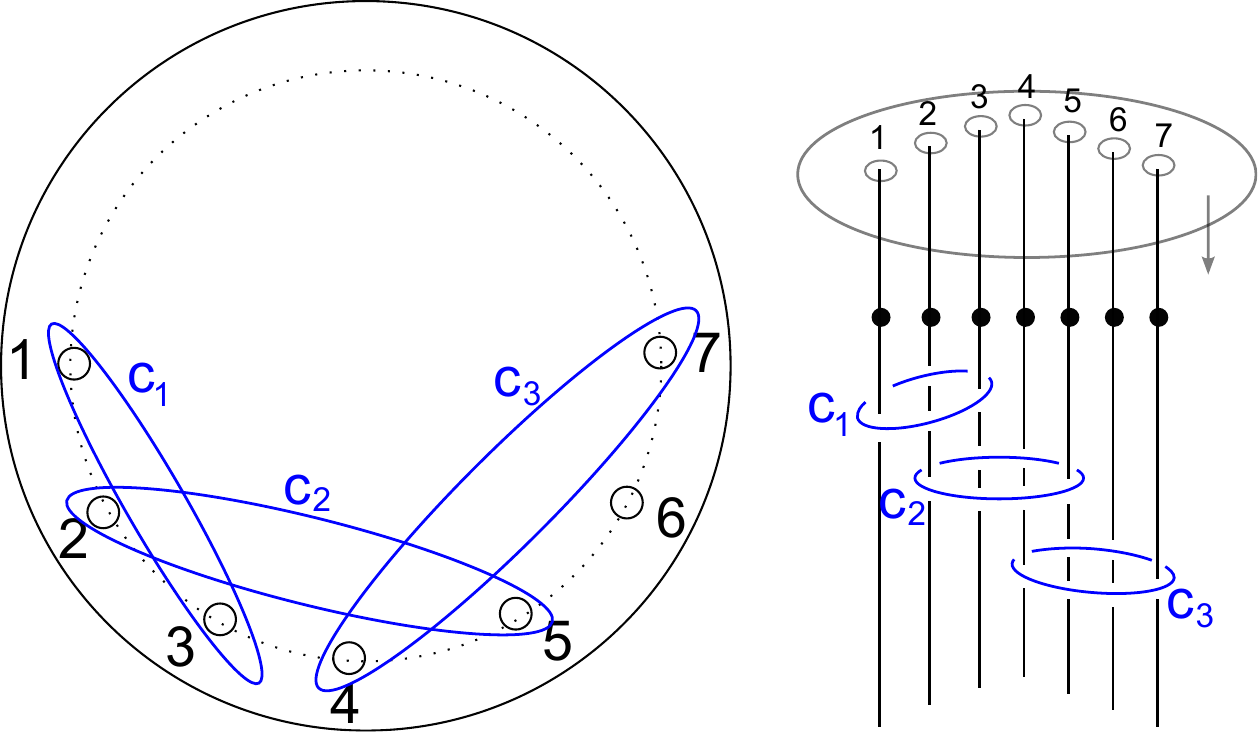}
			\caption{The Lefschetz fibration corresponding to the monodromy factorization $\phi_{1,3}\phi_{2,5}\phi_{4,7}=\phi_{c_1}\phi_{c_2}\phi_{c_3}$.}
			\label{fig:LFconv}
		\end{figure}
		
		If $A$, $B$, and $C$ are collections of holes ordered counter-clockwise on the disk, the \emph{lantern relation} is 
		$$\phi_{A\cup B\cup C}\phi_A\phi_B\phi_C=\phi_{A\cup B}\phi_{A\cup C}\phi_{B\cup C}.$$ 
		Similarly for $B_0,B_1,\cdots, B_p$ ordered counterclockwise, the \emph{daisy relation} (defined in \cite{EMV}), is
		$$\phi_{B_0\cup B_1\cup \cdots \cup B_p}\phi_{B_0}^{p-1}\phi_{B_1}\cdots \phi_{B_p} = \phi_{B_0\cup B_1}\phi_{B_0\cup B_2}\cdots \phi_{B_0\cup B_p}\phi_{B_1\cup \cdots \cup B_p}.$$
		
		Finally, sequences of daisy relations yield the \emph{generalized lantern relation}:
		$$\phi_{1,\cdots, k}\phi_{1}^{k-2}\cdots \phi_{k}^{k-2} = \phi_{1,2}\phi_{1,3}\cdots \phi_{1,k}\phi_{2,3}\cdots \phi_{2,k}\cdots \phi_{k-2,k-1}\phi_{k-2,k}\phi_{k-1,k}$$
		}

		\begin{figure}
			\centering
			\captionsetup[subfigure]{width=1.3in}
			\subfloat[Embedding of cap plumbing.]{\label{fig:Emb}\includegraphics[scale=.45]{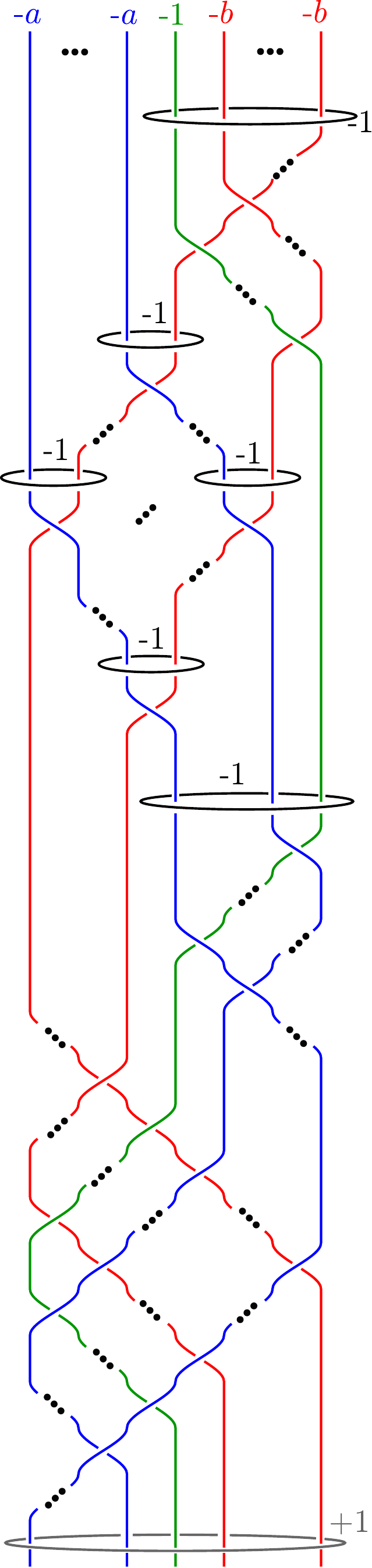}}\hspace{1cm}
			\subfloat[Complement of cap plumbing.]{\label{fig:Emb2}\includegraphics[scale=.45]{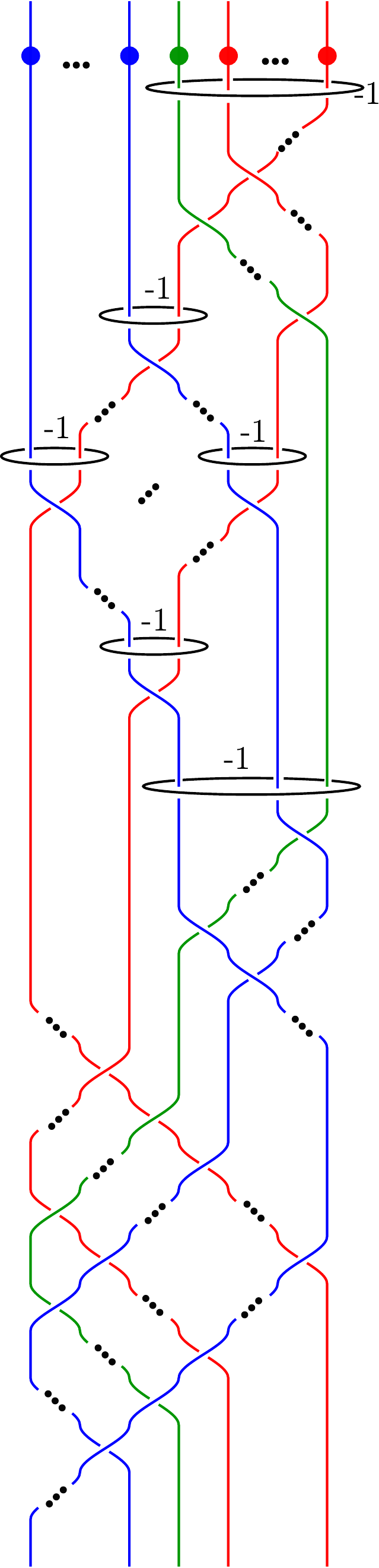}}\hspace{1cm}
			\subfloat[Lefschetz fibration for complement.]{\label{fig:LF}\includegraphics[scale=.7]{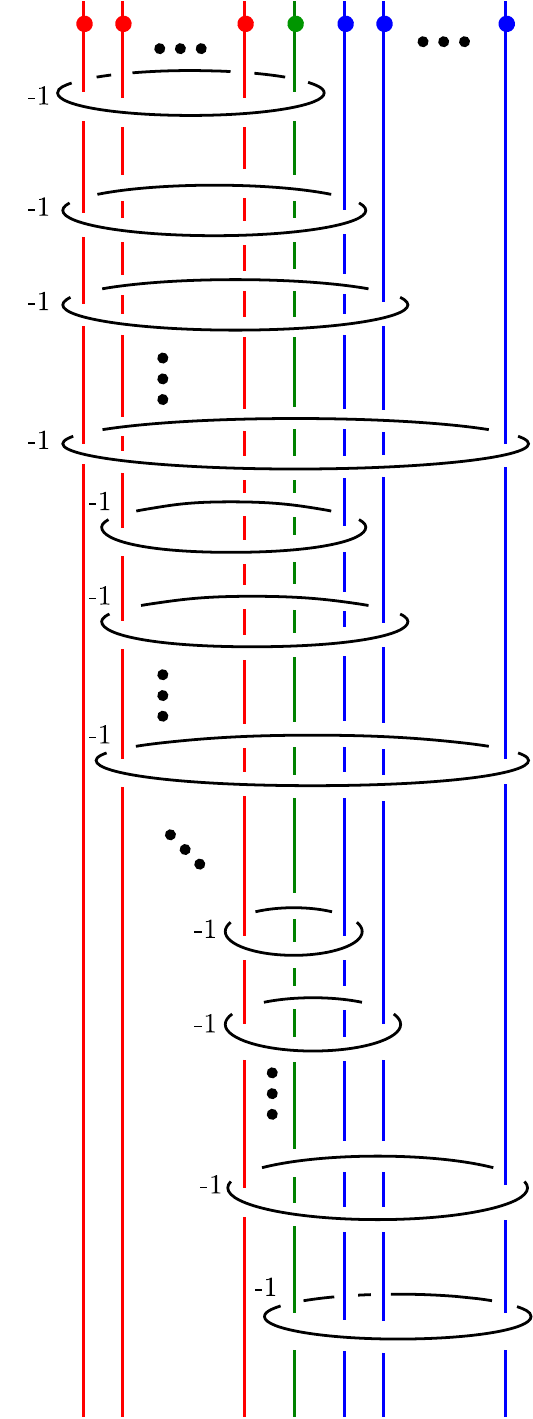}}
			\caption{The leftmost diagram represents a handlebody decomposition for $\CP2\#(ab+2)\barCP2$ with $(a+b+1)$ $3$-handles cancelling the $(a+b+1)$ $2$-handles whose attaching circles are the strands of the braid. The cores of the 2-handles attached along the  braid strands and the $+1$ framed circle, together with their disk Seifert surfaces form spheres in the cap plumbing. The center diagram is a handlebody for the complement of these embedded spheres (with no $3$- or $4$-handles) obtained by removing the braid $2$-handles and the $0$-handle, turning the resulting relative handlebody upsidedown, and simplifying the diagram by blowing down surgery curves on the lower boundary. Note that in this process the $(a+b+1)$ $3$-handles become the $(a+b+1)$ $1$-handles represented by dotted circles. The final diagram is an isotoped version of the center one which makes the Lefschetz fibration structure apparent. It is obtained from the previous diagram by rotating the plane of projection.}
			\label{fig:Embedding}
		\end{figure}

					\begin{lemma}\label{lem:MCGe0neg}
						Label the holes of the $m+1+n$-holed disk $A_1,\cdots, A_m$, $B$, and $C_1,\cdots, C_n$ counterclockwise. The following two products of positive Dehn twists are equal in the mapping class group of $\Sigma_{0,a+1+b}$
						$$F_{m,n}=\phi_{A_1,\cdots, A_m,B,C_1,\cdots, C_n}\phi_{A_1}^n\cdots \phi_{A_m}^n\phi_B\phi_{C_1}^m\cdots \phi_{C_n}^m$$
						
						$$G_{m,n}=\phi_{A_1,\cdots, A_m,B}(\phi_{A_1C_1}\cdots \phi_{A_1C_n})\cdots (\phi_{A_mC_1}\cdots \phi_{A_mC_n})\phi_{B,C_1,\cdots, C_n}$$
					\end{lemma}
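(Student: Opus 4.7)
The plan is to prove $F_{m,n} = G_{m,n}$ by induction on $m$, with the key tool at each step being a single application of the daisy relation, allowing $B_1$ to be a nonsingleton clump of consecutive holes. In the \textbf{base cases}, for $m = 0$ both sides collapse to $\phi_{B,C_1,\ldots,C_n}\phi_B$ (the two factors commute because $\phi_B$ may be represented by a curve nested inside the one for $\phi_{B,C_1,\ldots,C_n}$); for $m = 1$ the daisy relation with $B_0 = \{A_1\}$, $B_1 = \{B\}$, $B_{1+j} = \{C_j\}$, and $p = n+1$ directly produces $F_{1,n} = G_{1,n}$.

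For the \textbf{inductive step} with $m \geq 2$, apply daisy with $B_0 = \{A_1\}$, $B_1 = \{A_2,\ldots,A_m,B\}$ treated as a single clump, and $B_{1+j} = \{C_j\}$ for $j = 1,\ldots,n$ (so $p = n+1$), which gives
\begin{equation*}
\phi_{A_1,\ldots,A_m,B,C_1,\ldots,C_n}\,\phi_{A_1}^n\,\phi_{A_2,\ldots,A_m,B}\,\phi_{C_1}\cdots\phi_{C_n} = \phi_{A_1,\ldots,A_m,B}\,\phi_{A_1 C_1}\cdots\phi_{A_1 C_n}\,\phi_{A_2,\ldots,A_m,B,C_1,\ldots,C_n}.
\end{equation*}
I plan to rewrite $F_{m,n}$ so that the left-hand side of this identity appears as a contiguous block. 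Factor $\phi_{A_2}^n\cdots\phi_{A_m}^n\phi_B = \phi_{A_2,\ldots,A_m,B}\cdot(\phi_{A_2,\ldots,A_m,B}^{-1}\phi_{A_2}^n\cdots\phi_{A_m}^n\phi_B)$ and peel off a single factor of each $\phi_{C_j}^m = \phi_{C_j}\cdot\phi_{C_j}^{m-1}$, then slide $\phi_{A_2,\ldots,A_m,B}$ and the $\phi_{C_j}$'s into the daisy block. Every commutation used is of the ``nested or disjoint curves'' type: $\phi_{A_2,\ldots,A_m,B}$ commutes with each singleton $\phi_{A_i}$, $\phi_B$, $\phi_{C_j}$, and $\phi_{A_2,\ldots,A_m,B}^{-1}$ commutes past the smaller big twist $\phi_{A_2,\ldots,A_m,B,C_1,\ldots,C_n}$. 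Substituting the daisy identity gives
\begin{equation*}
F_{m,n} = \phi_{A_1,\ldots,A_m,B}\,\phi_{A_1 C_1}\cdots\phi_{A_1 C_n}\,\phi_{A_2,\ldots,A_m,B}^{-1}\cdot F_{m-1,n}',
\end{equation*}
where $F_{m-1,n}' = \phi_{A_2,\ldots,A_m,B,C_1,\ldots,C_n}\,\phi_{A_2}^n\cdots\phi_{A_m}^n\,\phi_B\,\phi_{C_1}^{m-1}\cdots\phi_{C_n}^{m-1}$ is precisely the $F$-factorization on the subsurface obtained by filling in the $A_1$-hole (with $A_2,\ldots,A_m$ playing the role of the $A$-holes).

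Applying the inductive hypothesis on this subsurface yields $F_{m-1,n}' = \phi_{A_2,\ldots,A_m,B}\,\phi_{A_2 C_1}\cdots\phi_{A_m C_n}\,\phi_{B,C_1,\ldots,C_n}$, and the leading $\phi_{A_2,\ldots,A_m,B}$ cancels against the $\phi_{A_2,\ldots,A_m,B}^{-1}$ already present to leave precisely $G_{m,n}$. The main technical obstacle is the careful bookkeeping: one must apply the daisy relation with the nonsingleton clump $B_1$, and systematically justify each commutation by choosing convex representative curves for every $\phi_S$ that are nested (when one collection of holes contains another) or disjoint (when the underlying collections are disjoint as subsets of the consecutive CCW order). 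No mapping class group relation beyond the daisy relation and these elementary commutations is invoked, so once the bookkeeping is laid out, the induction runs cleanly.
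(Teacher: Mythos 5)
Your proof is correct, and it takes a genuinely different route from the paper's. The paper proves the lemma by applying the generalized lantern relation once to $F_{m,n}$ (expanding $\phi_{A_1,\cdots,A_m,B,C_1,\cdots,C_n}$ into all pairwise twists at the cost of negative powers of the hole-parallel twists) and twice to $G_{m,n}$ (expanding $\phi_{A_1,\cdots,A_m,B}$ and $\phi_{B,C_1,\cdots,C_n}$), and then checks that the two resulting non-positive factorizations consist of the same factors up to commuting Dehn twists about disjoint curves. You instead induct on $m$, using at each step a single daisy relation with the non-singleton clump $B_1=\{A_2,\ldots,A_m,B\}$; this peels off $\phi_{A_1,\cdots,A_m,B}\,\phi_{A_1C_1}\cdots\phi_{A_1C_n}$ and, after cancelling the inserted $\phi_{A_2,\ldots,A_m,B}^{\pm 1}$ and using that the hole-parallel twists are central, leaves exactly the $F$-word for $(m-1,n)$ on the holes $A_2,\ldots,A_m,B,C_1,\ldots,C_n$. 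Your approach localizes the bookkeeping --- each step is one clumped daisy application plus commutations of nested or disjoint convex curves --- whereas the paper must match two long global products in one shot, a comparison it treats rather tersely; both arguments pass through non-positive intermediate words, consistent with the paper's remark that no positive intermediate factorization exists. Two minor notes: your base case $m=1$ is subsumed by the inductive step applied to $m=0$ (the clump degenerates to $\{B\}$), and the clumped daisy relation you invoke is exactly the form the paper states (for collections $B_0,\ldots,B_p$ of consecutive holes ordered counterclockwise), so no extra justification is needed there.
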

					
					\begin{proof}
						
						Applying the generalized lantern relation to the factorization for $F_{m,n}$, we get a new factorization (not positive) given by
						$$\phi_{A_1}^{-m+1}\cdots \phi_{A_m}^{-m+1}\phi_B^{-m-n+2}\phi_{C_1}^{-n+1}\cdots \phi_{C_n}^{-n+1}(\phi_{A_1,A_2}\cdots \phi_{A_1,A_m})\phi_{A_1,B}(\phi_{A_1,C_1},\cdots \phi_{A_1,C_n})$$
						$$(\phi_{A_2,A_3}\cdots \phi_{A_2,A_m})\phi_{A_2,B}(\phi_{A_2,C_1},\cdots \phi_{A_2,C_n})\cdots (\phi_{A_{m-1},A_m})\phi_{A_{m-1},B}(\phi_{A_{m-1},C_1},\cdots \phi_{A_{m-1},C_n})$$
						$$\phi_{A_m,B}(\phi_{A_m,C_1}\cdots \phi_{A_m,C_n})(\phi_{B,C_1}\cdots \phi_{B,C_n})(\phi_{C_1,C_2}\cdots \phi_{C_1,C_n})\cdots (\phi_{C_{n-2}C_{n-1}}\phi_{C_{n-2},C_n})\phi_{C_{n-1},C_n}$$
						
						On the other hand, applying the generalized lantern relation twice on the factorization for $G_{m,n}$ to split the twists $\phi_{A_1,\cdots, A_m,B}$ and $\phi_{B,C_1,\cdots, C_n}$ shows that $\psi_{m,n}$ is equal to the product:
						$$\phi_{A_1}^{-m+1}\cdots \phi_{A_m}^{-m+1}\phi_B^{-m+1}(\phi_{A_1,A_2}\cdots \phi_{A_1,A_m})\phi_{A_1,B}(\phi_{A_2,A_3}\cdots \phi_{A_2,A_m})\phi_{A_2,B}\cdots (\phi_{A_{m-1},A_m})\phi_{A_{m-1},B}$$
						$$\phi_{A_m,B}(\phi_{A_1,C_1}\cdots \phi_{A_1,C_n})(\phi_{A_2,C_1}\cdots \phi_{A_2,C_n})\cdots(\phi_{A_{m-1},C_1}\cdots \phi_{A_{m-1},C_n})(\phi_{A_m,C_1}\cdots \phi_{A_m,C_n})$$
						$$\phi_{B}^{-n+1}\phi_{C_1}^{-n+1}\cdots \phi_{C_n}^{-n+1}(\phi_{B,C_1}\cdots \phi_{B,C_n})(\phi_{C_1,C_2}\cdots \phi_{C_1,C_n})\cdots (\phi_{C_{n-2},C_{n-1}}\phi_{C_{n-2},C_{n}})\phi_{C_{n-1},C_n}$$
						
						Comparing these two factorizations, we see that they differ only by commuting Dehn twists about curves which are disjoint. Specifically, all the twists about boundary parallel curves commute with anything, and the products of the form $(\phi_{A_i,C_1}\cdots \phi_{A_i,C_n})$ commute with Dehn twists about curves convexly enclosing any collection of the holes $\{A_{i+1}, \cdots, A_{m},B\}$.
					\end{proof}
					
		\begin{remark}
			Notice that the intermediate factorizations between $F_{m,n}$ and $G_{m,n}$ involve negative Dehn twists. Therefore these intermediate factorizations do not have geometric interpretations as symplectic fillings. The classification indicates that in fact there are no intermediate positive factorizations with such geometric interpretations.
		\end{remark}		
	
		We conclude that the plumbing $P_{a,b}$ and the filling $L_{a,b}$ described by the Lefschetz fibration of figure \ref{fig:LF} provide a complete list of symplectic fillings of their common contact boundary. Next we show that it is not possible to obtain $L_{a,b}$ from $P_{a,b}$ from any sequence of sympletic rational blow-downs when $a+b$ does not divide $ab+1$.
		
		Any sequence of symplectic rational blow-downs on the plumbing $P_{a,b}$ would produce a sequence of minimal symplectic fillings of the same contact boundary (since the rational blow-downs are symplectic operations that are performed on the interior). Since there are only two minimal symplectic fillings, the only way to obtain $L_{a,b}$ from $P_{a,b}$ by a sequence of rational blow-downs is by a single rational blow-down. The change in the Euler characteristic of a manifold before and after a rational blow-down is precisely the number of spheres in the rational blow-down graph. The Euler characteristic of $P_{a,b}$ is $a(b-1)+b(a-1)+1+1=2ab-a-b+2$, and the Euler characteristic $L_{a,b}$ is $ab-a-b+2$. Therefore it suffices to show that there is no plumbing with $ab$ vertices which can be rationally blown-down and which embeds into $P_{a,b}$.
		
		The classification of star-shaped plumbings which can be rationally blown-down was completed by Bhupal and Stipsicz \cite{BS}, and it was shown in \cite{PShS} that no other plumbings admit rational disk smoothings. The graphs which can be rationally blown-down are either linear or star-shaped with three or four arms. Using the list of Bhupal and Stipsicz, we observe that in each three or four armed graph which can be rationally blown-down, there is at least one sphere with self-intersection number $(-3)$ or $(-4)$, except one family which contains $5+q$ spheres, and one of them has self-intersection $(-6)$. The linear graphs which can be rationally blown include the examples of Fintushel and Stern and the more general examples of Park, where the continued fraction expansion of the weights is $-\frac{p^2}{pq-1}$ for $gcd(p,q)=1$. There is an recursive procedure to build all these linear plumbings described in \cite{SSW} section 4, which we review here. 
		
		Starting with a graph with one vertex labeled $(-4)$, one vertex labeled $(-1)$, and two edges between them we choose one of the two edges emanating from the $(-1)$ vertex and blow-up along that edge, meaning decrease the labels on each of the adjacent vertices by $1$, and insert a new vertex labeled $(-1)$ in the middle of the edge. Repeat this process, always blowing up along an edge emanating from the unique $(-1)$ vertex. We obtain linear graphs from these cyclic ones by deleting the unique $(-1)$ vertex along with its adjacent edges. See figure \ref{fig:linearQBD} for the first few steps. The linear graphs which can be obtained by this procedure form the entire family of Park's generalized linear rational blow-downs. Notice that if the final linear graph contains $N$ vertices, then it was obtained by this procedure by performing $N-1$ blow-ups starting with the base $-4$ case. Without loss of generality, we may suppose the first blow-up was done along the edge which emanates counter-clockwise from the $(-1)$ vertex. Suppose we perform a total of $m_1\geq 1$ blow-ups along the counter-clockwise emanating edge before performing one on the clockwise emanating edge. Then we obtain a vertex labeled $(-4-m_1)$ adjacent to the $(-1)$ vertex along with $m_1$ $(-2)$ vertices on the other side. Then suppose we perform $m_2$ blow-ups along the clockwise emanating edge before switching back. This has the effect of changing one of the $(-2)$ vertices to a $(-2-m_2)$ vertex and leaving a chain of $(-2)$ vertices on the other side. As we switch back and forth performing $m_j$ blow-ups on a given side each time, we produce a linear graph whose vertices have labels $(-4-m_1), (-2-m_2), \cdots , (-2-m_n)$ along with many $(-2)$ vertices. The total number of vertices at the end is $1+\sum_j m_j$.
		
		\begin{figure}
			\centering
			\includegraphics[scale=.5]{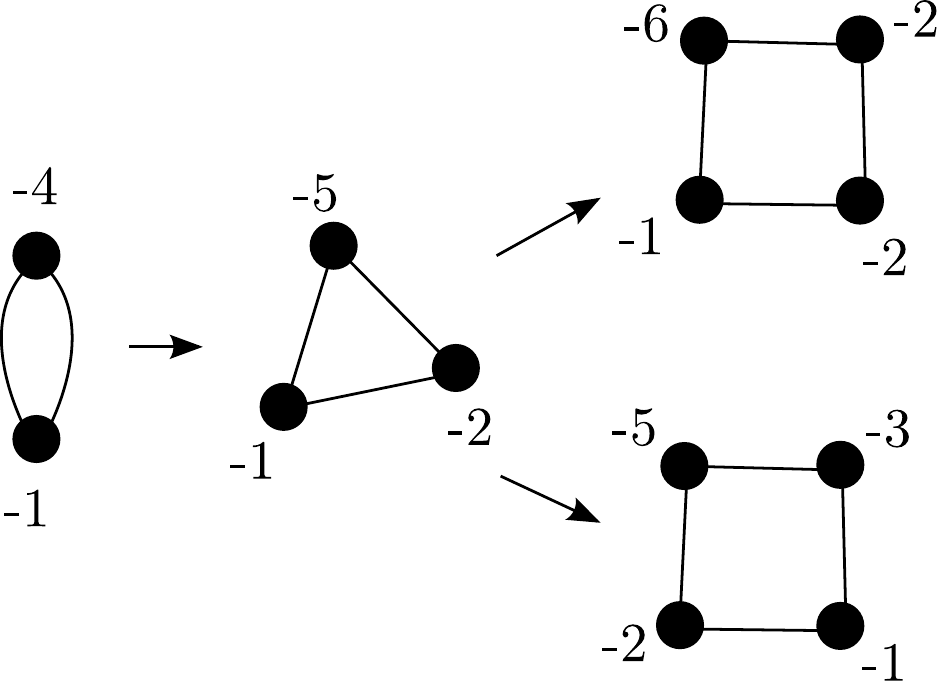}
			\caption{Recursive procedure to build linear graphs which can be rationally blown-down.}
			\label{fig:linearQBD}
		\end{figure}
	
		If the plumbing $P_{a,b}$ could be symplectically rationally blown down to obtain $L_{a,b}$, there must exist a plumbing with $ab$ vertices, which can be rationally blown-down and can embeds into $P_{a,b}$. We use the adjunction formula to rule out this possibility. $H_2(P_{a,b};\Z)$ is generated by the core spheres of the plumbing. Let $S_0$ denote the central sphere, and $S_i$ for $i\in \{1,\cdots, 2ab-a-b\}$, denote the spheres in the arms. 
		
		Each core sphere is symplectic so the adjunction formula holds: $\langle c_1(\omega), S_i\rangle = [S_i]^2+2$. Therefore, $\langle c_1(\omega),[S_0]\rangle =-a-b$ and $\langle c_1(\omega),[S_i]\rangle =0$ for $i\geq 1$. Now for any other symplectic sphere $S$ embedded in $P$, we can write $[S]=\sum_{i=0}^{ab-a-b}a_i[S_i]$. Then $[S]^2+2=\langle c_1(\omega),[S]\rangle = -(a+b)a_0$. In particular, $[S]^2+2$ must be divisible by $(a+b)$. By definition, we assume $a,b\geq 2$. If $a+b>4$ then any plumbing which contains a symplectic sphere $S$ of square $-3$, $-4$, or $-6$ cannot embed into $P_{a,b}$ because then $[S]^2+2\in \{-1,-2,-4\}$ which is never divisible by $a+b> 4$. Furthermore, when $a=b=2$ it is not possible to embed the plumbings with $-3$ or $-4$ spheres and the other non-linear examples have at least $5>ab$ vertices so those could not be used to get from $P_{2,2}$ to $L_{2,2}$. To rule out the linear rational blow-downs we use the description above of these graphs. If all of the spheres $S$ in the linear graph have the property that $[S]^2+2$ is divisible by $(a+b)$ then by the above discussion, $(a+b)$ divides $(-2-m_1), -m_2, \cdots, -m_n$. Therefore $(a+b)$ divides $2+\sum_j m_j$. On the other hand if the rational blow-down changes $P_{a,b}$ to $L_{a,b}$ then it must rationally blow-down a graph with $ab$ vertices so $ab=1+\sum_jm_j$. We conclude that $ab+1$ is divisible by $(a+b)$. Therefore under the hypotheses of the theorem, $L_{a,b}$ cannot be obtained from $P_{a,b}$ by any sequence of rational blow-downs.
		
	\end{proof}

	\begin{remark}
		We expect that all of the star surgery operations which replace $P_{a,b}$ by $L_{a,b}$ are inequivalent to sequences of rational blow-downs (as well as many other star surgeries). The hypothesis of the current proof ensures that we cannot embed at least one sphere of any appropriately sized rational blow-down. Lattice embedding arguments could be used to prove stronger results in additional cases.
	\end{remark}
	
	\begin{remark}
		Interpreting these star surgery operations as monodromy substitutions, we conclude that the relation in the planar mapping class monoid given by Lemma \ref{lem:MCGe0neg} cannot be generated by the lantern and daisy relations or the other relations corresponding to other rational blow-downs given in \cite{EMV}.
	\end{remark}

\bibliography{References}
\bibliographystyle{alpha}

\end{document}